\newtheoremstyle{break}
  {\topsep}{\topsep}%
  {\itshape}{}%
  {\bfseries}{}%
  {\newline}{}%
\newtheoremstyle{break1}
  {\topsep}{\topsep}%
  {}{}%
  {\bfseries}{}%
  {\newline}{}%
\newtheoremstyle{break2}
  {\topsep}{\topsep}%
  {}{}%
  {\bfseries}{}%
  {\newline}{}%
\newtheoremstyle{small}
  {\topsep}{\topsep}%
  {\small}{}%
  {\bfseries}{}%
  {\newline}{}%
\theoremstyle{break} \newtheorem{theorem}{Theorem}[section]
\theoremstyle{break} 
\theoremstyle{break1} \newtheorem{remark}[theorem]{Remark}
\theoremstyle{break2} \newtheorem{remarks}[theorem]{Remarks}
\theoremstyle{break} 
\theoremstyle{break1} \newtheorem{definition}[theorem]{Definition} 
\theoremstyle{break} 
\theoremstyle{break} 
\theoremstyle{break} 
\theoremstyle{break} 
\theoremstyle{break1} \newtheorem{problem}[theorem]{Problem}
\theoremstyle{break} 
\theoremstyle{break} 
\theoremstyle{break} 
\theoremstyle{break1} \newtheorem{conclusion}[theorem]{Conclusion}
\theoremstyle{small} \newtheorem{remhist}[theorem]{Remark}
\numberwithin{equation}{section}
\renewcommand{\S}{\mathcal{S}}
\newcommand{\hide}[1]{}
\newcommand{\N}{{\mathbb{N}}}
\newcommand{\R}{{\mathbb{R}}}
\newcommand{\D}{{\mathbb{D}}}
\newcommand{\C}{{\mathbb{C}}}
\newcommand{\B}{{\mathbb{B}}}
\def\Re{\mathop{{\rm Re}}}
\def\HB{\mathop{{\rm Hol}}(\B^n,\C^n)}
\def\Sn{\mathop{\mathbb{S}_n}}
\def\M{\mathcal{M}}
\def\Un{\mathop{\mathbb{U}_n}}
\def\id{\mathop{{\rm id}}}
\begin{document}

\vspace*{-1cm}

\begin{center}
{\Large \bf Is there a Teichm\"uller principle 
in higher dimensions?}
\end{center}

\medskip
\renewcommand{\thefootnote}{\arabic{footnote}}
\begin{center}
{\large Oliver Roth}\\[1mm]
\today\\[2mm]
\end{center}
\medskip

\renewcommand{\thefootnote}{}
\footnotetext{Mathematics Subject Classification (2000) \quad Primary 30C55
  $\cdot$ 32H02
  $\cdot$  49K15}

\medskip

\begin{abstract}
The underlying theme of Teichm\"uller's papers in function theory is a general
principle which asserts that every extremal problem for univalent functions
of one complex variable is connected with an associated  quadratic
differential. The purpose of this paper is to indicate a possible way of 
extending Teichm\"uller's principle to several complex variables. This
approach is based on the Loewner differential equation.

\end{abstract}
\renewcommand{\thefootnote}{\arabic{footnote}}
\section{Introduction}

We denote by $\HB$ the set of 
all holomorphic maps from the open unit ball $\B^n:=\{z \in \C^n \, : \,
||z||<1\}$ equipped with  the standard Euclidean norm $||\cdot||$ of $\C^n$ into $\C^n$. 
Endowed with the compact--open topology of locally uniform convergence, the vector space
$\HB$ becomes a  Fr\'echet space. In geometric function
theory, the \textit{univalent} maps in $\HB$ are of
particular interest and extremal problems 
provide an effective method for
establishing the existence of univalent maps with certain natural properties.

\medskip

This point of view is particularly successful in the classical one--dimensional case, since
 the class
$$ \S:=\{f \in \text{Hol}(\D,\C) \, : \, f(0)=0, f'(0)=1, f
\text{ univalent}
\}$$
of all normalized univalent (or schlicht) functions on the unit disk $\D:=\B^1$
is a compact subset of
$\text{Hol}(\D,\C)$ and so any continuous functional
$J : \S \to \R$ attains its maximum value within the class $\S$. This means
that there exists at least one $F \in \mathcal{S}$ such that $J(f) \le J(F)$
for any $f \in \mathcal{S}$. We call such a map $F$ an \textit{extremal function for
$J$ over $\mathcal{S}$}. 

\medskip

Around 1938, Teichm\"uller \cite{Teichmueller1938} stated a general principle which roughly says that
any extremal problem over $\mathcal{S}$ is associated in a
well--defined way with a quadratic differential. Teichm\"uller did not
go on to give a precise formulation of his principle in upmost generality, 
but he did content himself with applying his principle to a number of specific,
yet characteristic special cases. 
Only much later, Jenkins (see \cite{Jenkins1958}) succeeded in
formulating what is called the General Coefficient Theorem and which can be
regarded as a rigorous version of Teichm\"uller's principle for a fairly 
large class of extremal problems.
\medskip

In order to state Teichm\"uller's principle,
the following notion is useful.

\begin{definition}[\cite{Strebel1984, Pommerenke1975}]
Let $Q$ be a meromorphic function on $\C$. A formal expression of the form
$$ Q(w) \, dw^2$$
is called a \textit{quadratic differential}. A function $F \in \mathcal{S}$ is called
\textit{admissible for the quadratic differential} $Q(w) \, dw^2$, if $F$ maps $\D$
onto $\C$ minus a set of finitely many analytic arcs $w=w(t)$ satisfying $Q(w) dw^2>0$.
\end{definition}

Now, roughly speaking Teichm\"uller's principle asserts that to each quadratic
differential one can associate an extremal problem for univalent functions  in
such a way that the extremal functions are admissible for this quadratic
differential. This principle has a partial converse, first established by
Schiffer \cite{Schiffer1938}, which says that for any extremal problem for univalent
functions one can  associate a quadratic differential $Q(w) \, dw^2$ so that
the corresponding extremal
functions are admissible for $Q(w)\,dw^2$. 

\medskip

For the sake of simplicity, we restrict our discussion to a particular simple,
yet important case and consider for a fixed integer $N \ge 2$ the $N$--th
coefficient functional
$$ J_N(f):=a_N \, , \qquad  f(z)=z+\sum \limits_{k=2}^{\infty} a_k
z^k \in\text{Hol}(\D,\C) \, .$$  
For this functional, one can  state
the Teichm\"uller--Schiffer paradigma in a  precise and simple way, see \cite[Chapter 10.8]{Duren1983}.
We start with the Schiffer differential equation.

\begin{theorem}[Schiffer's Theorem] \label{thm:schiffer}
Let $F(z)=z+\sum \limits_{k=2}^{\infty} A_k z^k \in \mathcal{S}$ 
be an extremal function for  $\Re J_N$ over $\S$ and let
\begin{equation} \label{eq:Pn}
P_N(w):=\sum \limits_{k=1}^{N-1} J_N(F^{k+1}) w^k \, . 
\end{equation}
Then $F$ is admissible for the quadratic differential 
$$-P_N\left(\frac{1}{w}\right) \frac{dw^2}{w^2}\, ,$$
and $F$ is a solution to the differential equation
\begin{equation} \label{eq:schiffer}
 \left[ \frac{z F'(z)}{F(z)} \right]^2 P_N\left(\frac{1}{F(z)}\right)=R_N(z)
  \, ,
\end{equation}
with 
\begin{align} 
R_N(z)&= (N-1) A_n+\sum \limits_{k=1}^{N-1} \left( k A_k z^{k-N}+k
  \overline{A_k} z^{N-k} \right)\, \label{eq:Rn} \\
\intertext{and}
R_N(\kappa) &\ge  0 \quad \text{ for all } \kappa \in \partial \D \text{
  with equality for at least one } \kappa_0 \in \partial \D \,
.\label{eq:schifferboundary} 
\end{align}
\end{theorem}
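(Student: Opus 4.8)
The plan is to run Schiffer's classical variational argument, essentially the treatment in \cite[Chapter~10.8]{Duren1983}: an interior variation of $F$ produces the differential equation \eqref{eq:schiffer}, and a boundary variation then yields admissibility together with \eqref{eq:schifferboundary}. To begin I would invoke Schiffer's interior variational lemma. Fix the extremal $F$ and a point $\zeta\in\D$, and put $w_0:=F(\zeta)\neq0$; then for every real $\theta$ and every sufficiently small $\rho>0$ there is a function $F_\rho\in\mathcal{S}$ with
\[
F_\rho\;=\;F+\rho\,\bigl(e^{i\theta}V_1+e^{-i\theta}V_2\bigr)+o(\rho)
\]
locally uniformly on $\D$, where in the usual normalisation $V_1(z)=F(z)^2/\bigl(F(z)-w_0\bigr)$, up to a term linear in $F(z)$ and $F(z)^2$ enforcing $F_\rho(0)=0$, $F_\rho'(0)=1$, while $V_2$ is the ``reflected'' kernel, holomorphic in $z$ but built from $\overline{w_0}$ in place of $w_0$. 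Constructing $F_\rho$ honestly --- by Schiffer's or Goluzin's variational method, or a quasiconformal modification --- with a remainder that is $o(\rho)$ uniformly in $\theta$ and locally uniformly in $z$ is the real work in this step.

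\medskip

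Because $J_N$ is linear and returns the coefficient of $z^N$, we get $J_N(F_\rho)=A_N+\rho\,\bigl(e^{i\theta}J_N(V_1)+e^{-i\theta}J_N(V_2)\bigr)+o(\rho)$, so extremality of $F$ for $\Re J_N$ forces $\Re\bigl(e^{i\theta}J_N(V_1)+e^{-i\theta}J_N(V_2)\bigr)\le0$ for \emph{every} $\theta$, and maximising over $\theta$ gives $J_N(V_1)=-\overline{J_N(V_2)}$ for all $\zeta\in\D$. The polynomial $P_N$ now appears of its own accord: from $F(z)^2/(F(z)-w_0)=-\sum_{m\ge0}F(z)^{m+2}/w_0^{m+1}$ and $F(z)^{m+2}=z^{m+2}+\cdots$, every term with $m+2>N$ is invisible in the coefficient of $z^N$, so by \eqref{eq:Pn}
\[
J_N(V_1)\;=\;-\sum_{k=1}^{N-1}\frac{J_N(F^{k+1})}{w_0^{k}}+(\text{linear in }A_2,\dots,A_N)\;=\;-P_N\!\left(\frac1{w_0}\right)+(\text{linear in }A_2,\dots,A_N),
\]
while $J_N(V_2)$ is built from $\overline{w_0},\overline{A_2},\dots,\overline{A_N}$. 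Substituting $w_0=F(\zeta)$ and clearing the kernel by the factor $\bigl(\zeta F'(\zeta)/F(\zeta)\bigr)^2$ --- precisely the Jacobian with which a quadratic differential pulls back under $w=F(\zeta)$ --- recasts $J_N(V_1)=-\overline{J_N(V_2)}$ as the statement that
\[
\Phi(\zeta)\;:=\;\left[\frac{\zeta F'(\zeta)}{F(\zeta)}\right]^2P_N\!\left(\frac1{F(\zeta)}\right)
\]
is holomorphic on $\D\setminus\{0\}$, has a pole of order exactly $N-1$ at $0$ (since $P_N(1/F)$ does and $\zeta F'/F\to1$ there), and is real on $\partial\D$ --- the reflected kernel $V_2$ being precisely what makes the extremality identity say this. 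By the reflection principle $\Phi$ then extends across $\partial\D$ to a rational function on $\widehat{\C}$ whose only poles are at $0$ and $\infty$, each of order $N-1$, i.e.\ to a Laurent polynomial $\sum_{j=-(N-1)}^{N-1}c_jz^j$ with $c_{-j}=\overline{c_j}$; matching its principal part and constant term at $\zeta=0$ against the expansion read off from $F(\zeta)=\zeta+A_2\zeta^2+\cdots$, a finite and routine computation, gives $c_0=(N-1)A_N$ and $c_{-j}=(N-j)A_{N-j}$ for $1\le j\le N-1$, which is exactly \eqref{eq:schiffer} with $R_N$ as in \eqref{eq:Rn}.

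\medskip

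For admissibility and \eqref{eq:schifferboundary} I would turn to Schiffer's boundary variation: enlarging $F(\D)$ slightly near a boundary point is a genuinely one--sided ($\rho\ge0$) variation, and the requirement that no such enlargement increase $\Re J_N$ forces $\C\setminus F(\D)$ to be a finite union of analytic arcs --- finitely many because $P_N$, hence the quadratic differential $-P_N(1/w)\,dw^2/w^2$, has only finitely many critical trajectories --- along each of which $-P_N(1/w)\,dw^2>0$. Pulling this differential back via $w=F(z)$ and using \eqref{eq:schiffer} turns it into $-R_N(z)\,dz^2/z^2$, which on $\partial\D$, where $dz^2/z^2=-d\varphi^2$, equals $R_N(e^{i\varphi})\,d\varphi^2$; positivity along the omitted arcs is therefore the inequality $R_N(\kappa)\ge0$ of \eqref{eq:schifferboundary}, with equality at each $\kappa_0\in\partial\D$ mapping to a free endpoint of an omitted arc, of which there is at least one. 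Together with the arc structure this is precisely admissibility of $F$ for $-P_N(1/w)\,dw^2/w^2$ in the sense of the Definition.

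\medskip

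The step I expect to be the main obstacle is the first one, together with the reality assertion in the second paragraph: one must build the varied maps $F_\rho\in\mathcal{S}$ rigorously with an $o(\rho)$ remainder uniform in the rotation parameter, and one must justify speaking of boundary values of $\Phi$ on $\partial\D$ --- equivalently, that the extremal $F$ extends continuously to the relevant boundary arcs --- before the reflection argument applies. Both points are standard but genuinely delicate; an alternative that stays inside $\D$ is to observe that $J_N(V_1)=-\overline{J_N(V_2)}$ forces an identity between two rational functions of $F(\zeta)$, valid on the open set $F(\D)$ and hence an identity of rational functions, from which the Laurent--polynomial form of $\Phi$ follows directly, or else to bypass the whole scheme in favour of the Loewner/maximum--principle approach anticipated in the introduction.
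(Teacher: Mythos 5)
The paper states Theorem~\ref{thm:schiffer} \emph{without proof}: it explicitly says that the article ``contains virtually no proof'' except for Theorem~\ref{thm:teichneu}, and immediately after Theorem~\ref{thm:schiffer} (Remark~\ref{rem:schiffer}(b),(d)) it refers the reader to \cite[Chapter~10.8]{Duren1983} and \cite{Pommerenke1975}. So there is no ``paper's own proof'' to match your sketch against; what you have written is an outline of the cited Duren--Schiffer variational argument, not of anything actually carried out here. It is worth noting, though, that the paper's own preferred route to this circle of ideas is \emph{different}: Theorem~\ref{thm:main2a} (Pontryagin's Maximum Principle for $\S^0_n$, cited to \cite{Roth2015}) together with the implication (b)$\Rightarrow$(a) of Theorem~\ref{thm:main123} yields Schiffer's equation as a consequence of the Loewner equation and optimal control, bypassing Schiffer's interior/boundary variations entirely --- precisely because, as Remark~\ref{rem:schiffer}(b) stresses, the latter are tied to the Riemann mapping theorem and hence to $n=1$, whereas the Loewner/Pontryagin approach generalizes. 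Your sketch and the paper's framing therefore serve complementary purposes.

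On the sketch itself, the first and third steps are the standard ones and are fine as outlines. The weak spot is the organisation of the second step. You claim the variational identity $J_N(V_1)=-\overline{J_N(V_2)}$ ``recasts as the statement that $\Phi$ is real on $\partial\D$,'' and then invoke reflection across $\partial\D$; this both misstates what the identity says and manufactures the boundary--regularity difficulty you then flag as ``the main obstacle.'' In the Duren/Schiffer treatment the second kernel $V_2$ is arranged so that $J_N(V_2)$ is, explicitly, a polynomial in $\overline{\zeta}$ of degree $\le N-1$, while $J_N(V_1)$ equals $-\bigl(\zeta F'(\zeta)/F(\zeta)\bigr)^2 P_N\bigl(1/F(\zeta)\bigr)$ plus a Laurent polynomial in $1/\zeta$ (coming from the normalisation corrections); the identity $J_N(V_1)=-\overline{J_N(V_2)}$ is thus \emph{already} an equality between $\Phi(\zeta)$ and a specific Laurent polynomial, valid pointwise for $\zeta\in\D$, and its constant term and principal part are read off directly. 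No reflection principle is needed, and no boundary values of $F$ or $\Phi$ enter at this stage. The ``alternative that stays inside $\D$'' you mention at the end is in fact the standard route, not an alternative --- you should promote it to the main argument and drop the reflection detour. Boundary regularity of $F$ does legitimately arise in the admissibility part (your third paragraph), but there it is handled by the boundary--variation lemma itself, which establishes the analytic--arc structure of $\C\setminus F(\D)$ as part of its conclusion rather than as a hypothesis.

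One small notational point: the remainder in Schiffer's variation is quadratic in the parameter (in Duren's normalisation the variation is $a\rho^2(\cdots)+O(\rho^4)$), and what one actually needs is that the error is $o$ of the linear term uniformly in $\theta$; your $o(\rho)$ conveys the right idea but blurs what must be proved. Also note the typo $A_n$ for $A_N$ in the paper's displayed formula \eqref{eq:Rn}; your reconstruction $(N-1)A_N$ is the intended reading, and your verification that $c_{-j}=(N-j)A_{N-j}$, $c_j=\overline{c_{-j}}$ matches \eqref{eq:Rn} is correct.
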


\begin{remarks} \label{rem:schiffer}
A few of remarks are in order.

\vspace*{-3mm}
\begin{itemize}
\item[(a)] The only functions $F \in \S$ for which Theorem
  \ref{thm:schiffer} actually applies are the Koebe functions $k(z)=z/(1 -\alpha
  z)^2$ with $\alpha^{N-1}=1$. This is a consequence of de Branges' theorem
  \cite{deBranges1985}, that is, the former Bieberbach conjecture, which
  states that $|a_N| \le N$ with equality if and only if $F(z)=z/(1-\eta z)^2$
  with $|\eta|=1$.
However, Schiffer's method works for much more general (``differentiable'')
functionals $ J : \S \to \C$ as  well, see \cite{Duren1983}.
In this  expository paper, we  nevertheless focus mainly on the
simple functional
 $J_N$ for various reasons. First of all, the more general cases of Schiffer's
 theorem are essentially as difficult to prove as the case $J_N$. Second, the
 relation to Teichm\"uller's principle is most easily described for the
 functional $J_N$. Finally, the main issue of this note are extensions to
 higher dimensions with a view toward a higher dimensional Bieberbach
 conjecture (see e.g.~\cite{BracciRoth2017}).

\item[(b)] In order to  prove  Theorem \ref{thm:schiffer} one compares the
  extremal function with nearby functions in the class $S$. The
  construction of  suitable comparison functions is a nontrivial task, because
  the family $\S$ is highly nonlinear. In one dimension there are several variational methods
  for univalent functions available. We mention the work of Schiffer,
  Schaeffer \& Spencer, Goluzin and others, see \cite{Duren1983, Pommerenke1975}. These methods  are
  geometric in nature and make use of the Riemann mapping theorem and are thus
  specifically one dimensional. A different approach is possible by way of the
  Loewner equation and Pontyagin's Maximum Principle from optimal control. We
  explain this in more detail  below.

\item[(c)] 
Equation (\ref{eq:schiffer}) is called the \textit{Schiffer differential equation}. It
is analogous to the Euler equation in the classical calculus of
variations. Like the Euler equation, it expresses the fact that every extremal
function is a critical point of $J_N$. However, an additional difficulty arises, because the 
Schiffer differential equation involves the initial coefficients $A_2, \ldots,
A_{N-1}$ of the unkown extremal function. 

\item[(d)] It is not difficult to show that
 $F \in \S$
satisfies Schiffer's equation (\ref{eq:schiffer}) such that the ``positivity condition''
 (\ref{eq:schifferboundary}) holds if and only if  $F$ is admissible for the quadratic
 differential $-P_N(1/w) \frac{dw^2}{w^2}$. See the proof of Theorem
 \ref{thm:schiffer} in \cite{Duren1983} for the ``only if''--part and
 e.g.~\cite[Proof of Theorem 7.5]{Pommerenke1975} for the ``if''--part.

\item[(e)] Theorem \ref{thm:schiffer} can further be strengthened by showing that
if  $F \in \mathcal{S}$ is
extremal for the functional $J_N$ over $\S$, then $F$ is a one--slit map, that
is, $F$ maps $\D$ onto $\C$ minus a single analytic arc. Moreover,
this arc has several additional
geometric properties such as increasing modulus, the $\pi/4$--property and an
asymptotic direction at infinity, see \cite[Chapter 10]{Duren1983} for more on this.
\end{itemize}
\end{remarks}

\begin{theorem}[Teichm\"uller's Coefficient Theorem \cite{Teichmueller1938}] \label{thm:teich}
Let $P(w)=w^{N-1}+c_{N-2} w^{N-2}+\cdots +c_1
w+c_0$ be a 
polynomial. Suppose that
$$F(z)=z+\sum\limits_{k=2}^{\infty} A_k z^k \in \mathcal{S}$$ is admissible for the
quadratic differential
$$-P\left(\frac{1}{w}\right) \frac{dw^2}{w^2}\, .$$
Then 
$$ \Re J_N(f) \le \Re J_N(F)$$
for any  
$$f \in \mathcal{S}(A_2,\ldots, A_{N-1})
:=\left\{f(z)=z+\sum \limits_{k=2}^{\infty} a_k z^k \in \mathcal{S} \, : \,
a_2=A_2, \ldots, a_{N-1}=A_{N-1}\right\}\, .$$ Equality occurs only for $f=F$. 
\end{theorem}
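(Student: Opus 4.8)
\medskip

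The plan is to convert the admissibility hypothesis into the boundary form of Schiffer's equation and then carry out a length--area (Gr\"otzsch--Jenkins) comparison between the two slit domains $F(\D)$ and $f(\D)$; this is the route of the General Coefficient Theorem, specialised to the functional $J_N$.

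First I would extract what admissibility gives. Since $F$ is admissible for $-P(1/w)\,dw^2/w^2$, the omitted set $E:=\C\setminus F(\D)$ is a finite union of analytic arcs lying on horizontal trajectories of this quadratic differential, and $F$ extends analytically across $\partial\D$ except at finitely many points. Set $R(z):=[zF'(z)/F(z)]^2P(1/F(z))$. Because $F$ is univalent, $F$ and $F'$ do not vanish on $\D\setminus\{0\}$, so $R$ is holomorphic on $\D\setminus\{0\}$ with a pole of order at most $N-1$ at the origin; reality of $R$ on $\partial\D$ then forces all Taylor coefficients of $R$ of order $\ge N$ to vanish, so $R$ is the Laurent polynomial \eqref{eq:Rn} (and the vanishing constraints pin $P$ down in terms of the coefficients of $F$). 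Pulling the quadratic differential back to $\partial\D$ via $w=F(e^{i\theta})$ turns the trajectory condition $-P(1/w)\,dw^2/w^2>0$ along the arcs of $E$ into $R(e^{i\theta})\ge0$, with equality at the point(s) mapping to a tip of a slit, which is \eqref{eq:schifferboundary}; this is the reasoning behind Remark~\ref{rem:schiffer}(d).

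Next comes the comparison. Fix $f\in\mathcal{S}(A_2,\dots,A_{N-1})$, put $E':=\C\setminus f(\D)$, and consider the univalent transition map $\Psi:=f\circ F^{-1}\colon\C\setminus E\to\C\setminus E'$. Since the first $N-1$ coefficients of $f$ and $F$ agree, $\Psi(w)=w+\beta w^{N}+O(w^{N+1})$ near $w=0$ with $\beta=J_N(f)-J_N(F)$, so the assertion is $\Re\beta\le0$. The quadratic differential $Q(w)\,dw^2=-P(1/w)\,dw^2/w^2$ has a pole of order $N+1$ at the interior point $w=0$ of $F(\D)$ and a pole at $w=\infty$, and it induces on $F(\D)$ the conformal metric $|Q(w)|^{1/2}\,|dw|$ for which the arcs of $E$ are geodesic boundary pieces. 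The engine is the trivial inequality
$$0\ \le\ \iint_{F(\D)\setminus(D_0\cup D_\infty)}\Bigl|\sqrt{Q(\Psi(w))}\,\Psi'(w)-\sqrt{Q(w)}\Bigr|^{2}\,dA(w),$$
with $D_0,D_\infty$ small disks around the two poles. Expanding the square, the two ``area'' terms combine into $\iint_{f(\D)}|Q|\,dA-\iint_{F(\D)}|Q|\,dA$ over the excised regions, while the cross term is the real part of a Dirichlet pairing of two (locally defined) holomorphic functions and so, by Stokes' formula, reduces to line integrals along $\partial D_0$, $\partial D_\infty$ and along the two banks of each arc of $E$. The bank integrals cancel precisely because $E$ runs along trajectories of $Q$ --- this is where admissibility enters; the $\partial D_\infty$ contribution is controlled by the trajectory structure at $\infty$; and the $\partial D_0$ contribution, computed from the expansions of $\Psi$, $Q$ and $P$ at $w=0$ together with $J_k(f)=A_k$ for $k<N$, equals a positive multiple of $-\Re\beta$ plus a quantity tending to $0$ as $D_0,D_\infty$ shrink. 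Letting the excised disks shrink yields $\Re\beta\le0$, i.e. $\Re J_N(f)\le\Re J_N(F)$. For the equality clause, $\Re J_N(f)=\Re J_N(F)$ forces the integrand above to vanish identically, so $\sqrt{Q(\Psi(w))}\,\Psi'(w)\equiv\sqrt{Q(w)}$; thus $\Psi$ is a $Q$-isometry of $F(\D)$ fixing the order-$(N+1)$ pole $w=0$ with $\Psi'(0)=1$, which forces $\Psi=\id$ and hence $f=F$.

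The main obstacle is exactly the length--area bookkeeping of the last step: $\sqrt{Q}$ is multivalued and non-integrable near both poles, so one must justify the Stokes computation on the non-compact, non-smooth domain $F(\D)\setminus\overline{E}$, verify that the bank integrals along the arcs genuinely cancel, and isolate precisely the coefficient $\beta$ from the expansion near $w=0$ while controlling every remaining term in the limit --- this is the technical core of the General Coefficient Theorem, and it is here that both the admissibility of $F$ and the normalisation $J_k(f)=A_k$ $(k<N)$ are genuinely used. Two variants are worth noting. One can organise the same identity as a single contour integral $\tfrac1{2\pi i}\oint_{|z|=r}[\cdots]\,dz$ in the $z$-plane and let $r\to1$, invoking $R(\kappa)\ge0$ on $\partial\D$ in place of the area term, trading the double integral for the boundary positivity but meeting the same regularity issues. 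Closer to the Loewner-equation theme of this paper, one may instead represent $f$ by a Loewner chain and track $J_N$ along it by Pontryagin's maximum principle; there, however, the positivity $R(\kappa)\ge0$ yields only the \emph{necessity} of Schiffer's equation, and an additional (again essentially length--area) argument is still required for the present sufficiency statement.
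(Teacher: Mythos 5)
The paper itself does not prove this theorem: it is an expository article that explicitly says it ``contains virtually no proof'' and cites \cite{Teichmueller1938} and \cite{Jenkins1958} for Theorem~\ref{thm:teich}; the only proved statement is Theorem~\ref{thm:teichneu}, and its one-line ``proof'' simply refers back to Theorem~\ref{thm:teich} together with Theorem~\ref{thm:main123} and Remark~\ref{rem:schiffer}(d). So there is no internal proof to compare you against, and your sketch should be judged on its own terms.

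Your route is the standard Gr\"otzsch--Teichm\"uller--Jenkins length--area argument, and in outline it is the right one: form the transition map $\Psi=f\circ F^{-1}$, observe $\Psi(w)=w+\beta w^N+O(w^{N+1})$ with $\beta=J_N(f)-J_N(F)$ (this is correct precisely because $a_2=A_2,\dots,a_{N-1}=A_{N-1}$), and run the nonnegativity of $\iint|\sqrt{Q\circ\Psi}\,\Psi'-\sqrt Q|^2$ against the trajectory structure of $Q\,dw^2$. Two cautions. First, a bookkeeping slip: after expanding the square, the two pure ``area'' integrals enter with a \emph{plus} sign; the difference $\iint_{\Psi(D)}|Q|-\iint_D|Q|$ you want only appears after splitting the cross term as $\sqrt{Q\circ\Psi}\,\Psi'\,\overline{\sqrt Q}=|Q|+(\sqrt{Q\circ\Psi}\,\Psi'-\sqrt Q)\overline{\sqrt Q}$ and absorbing the $|Q|$--part; what is then left on the right is exactly the difference of the $|Q|$--areas of the excised neighbourhoods of $0$ and $\infty$, which is where the expansion of $\Psi$ re-enters. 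Second, the equality clause is stated too quickly: $\sqrt{Q\circ\Psi}\,\Psi'\equiv\sqrt Q$ makes $\Psi$ a $Q$-preserving conformal map of $\C\setminus E$ onto $\C\setminus E'$ with $\Psi(0)=0$, $\Psi'(0)=1$, but to conclude $\Psi=\id$ one still needs the local structure of $Q$ at the pole of order $N+1$ (and at $\infty$) together with the fact that $E$, $E'$ lie on trajectories; this is standard in Jenkins' treatment but is not automatic. Finally, your closing remark is apt and consistent with the paper's Problem~\ref{prob:final}: the Loewner/Pontryagin machinery of Theorems~\ref{thm:main2a} and \ref{thm:main123} delivers the \emph{necessity} direction, and a purely Loewner-theoretic proof of the present sufficiency statement remains open there; your sketch is the classical quadratic-differential proof, not an answer to that problem.
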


In short, under the assumptions of Theorem \ref{thm:teich}, that is, if $F \in
\S$ is a solution to the Schiffer differential equation (\ref{eq:schiffer})
such that the positivity condition (\ref{eq:schifferboundary}) holds,
then  $F$  is an extremal function for the real part of the Bieberbach functional $J_N(f)$,
 but subject to the \textit{side conditions}  $a_2=A_2, \ldots,
 a_{N-1}=A_{N-1}$:

\begin{conclusion}
Let $F(z)=z+\sum\limits_{k=2}^{\infty} A_k z^k \in \mathcal{S}$. Then the condition that
\begin{center}  \fboxsep5mm
\fbox{$F$ is a solution to Schiffer's differential equation
  (\ref{eq:schiffer}) such that (\ref{eq:schifferboundary}) holds}
\end{center}
is
\begin{itemize}
\item[(a)] necessary for $F$ being extremal for $\Re J_N$ over the entire
  class $\S$, and
\item[(b)] sufficient for $F$ being extremal for $\Re J_N$ over the restricted
  class $\S(A_2,\ldots, A_{N-1})$.
\end{itemize}
\end{conclusion}

\begin{remark}
Teichm\"uller was quite confident about his result and he conjectured\footnote{``Ich vermute, die Gesamtheit dieser Ungleichungen liefere eine
vollst\"andige L\"osung des Bieberbachschen Koeffizientenproblems'' \cite[p.~363]{Teichmueller1938}} that it
can be used to solve the 

\begin{center}\fboxsep3mm
\fbox{\begin{minipage}{13cm}
\textit{\bf General Coefficient Problem for Univalent Functions}

\medskip
Given $F(z)=z+\sum\limits_{k=2}^{\infty} A_k z^k \in \mathcal{S}$. Find for
each $N \in \N$,
$$ \left\{a_N \, : f(z)=z+\sum \limits_{k=2}^{\infty} a_k z^k\in \S(A_2,\ldots, A_{N-1}) \right\} \, .$$
\end{minipage}}
\end{center}
\end{remark}

\medskip

The goal of this note is to show that Schiffer's theorem can  be extended, at
least in spirit, to higher dimensions using the Loewner equation. We also give a statement of  
Teichm\"uller's Coefficient Theorem entirely in terms of the Loewner equation.
This  principally opens up the possibility for an extension of Teichm\"uller's
principle to higher dimensions. 

\medskip

The literature on univalent functions in general, and the Loewner equation in
particular, is
extensive and there are several excellent survey papers available, see, e.g.~\cite{BCTV2014}.
We therefore have included only few references about the subject.
As this  paper is expository, it contains virtually no proof. An exception is Theorem \ref{thm:teichneu}.

\hide{\section*{Obstructions in higher dimensionss}

\subsection{Noncompactness and lack of Riemann mapping theorem}

\subsection{Bounded support points}

\subsection{Extremal length in higher dimension}}

\section{The Loewner differential equation and the class $S^0_n$.}\label{sec:loewner}

In higher dimensions, a major issue
is the fact that the class
$$ \S_n:=\{f \in \HB \, : \, f(0)=0, Df(0)=\id, f
\text{ univalent}
\}$$
of all normalized univalent mappings on $\B^n$ is not compact for any $n \ge
2$. This is easily seen e.g.~by considering  the noncompact family of shear mappings
$$ z=(z_1,\ldots, z_n)  \mapsto (z_1+\alpha z_2^2,z_2,\ldots, z_n) \, , \qquad
\alpha \in \C \, ,$$
which all belong to $\S_n$. In particular, continuous functionals $ J : \S_n
\to \R$ do not even need to have upper bounds if $n \ge 2$. In order to study extremal
problems for univalent functions in higher dimensions, it is therefore necessary to single out a compact subclass of
$\S_n$, and one of the most studied classes in this connection
is the class $\S^0_n$ of all mappings that admit a so--called parametric
representation by means of the Loewner differential equation. It turns out that the classes $\S^0_n$
are compact for each $n \ge 2$ and that $\S^0_1=\S$.

\medskip

We now briefly describe the classes $\S^0_n$ using almost standard notation.

\begin{definition}
Let $$\Un:=\big\{ h \in \HB \, : \, h(0)=0, Dh(0)=-\id, \, \Re \langle
  h(z),z\rangle \le 0 \text{ for all } z \in \B^n \big\} \, .$$
Here, $\langle \,\cdot,\cdot \rangle$ denotes the canonical Euclidean inner
product of $\C^n$. 
\end{definition}

It is not difficult to see that
a mapping $h \in \HB$ satisfying $h(0)=0$ and $Dh(0)=-\id$ belongs to $\Un$ 
if and only if $\Re \langle -h(z),z \rangle>0$ for all $z \in \B^n \backslash \{0\}$,
see \cite[Remark 2.1]{BHKG2016}. In particular, 
the set $\Un$ is exactly the class $-\mathcal{M}$ with $\M$ as defined e.g.~in
\cite[p.~203]{GK2003}.

\begin{theorem}[\cite{GK2003}: Theorem 6.1.39] \label{thm:classm}
$\Un$ is a compact and convex subset of $\HB$. 
\end{theorem}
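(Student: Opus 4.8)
The plan is to treat convexity and compactness separately. Convexity is immediate: for $h_0,h_1\in\Un$ and $t\in[0,1]$ the map $h_t:=(1-t)h_0+th_1$ is holomorphic on $\B^n$, fixes the origin with $Dh_t(0)=-\id$, and satisfies $\Re\langle h_t(z),z\rangle=(1-t)\,\Re\langle h_0(z),z\rangle+t\,\Re\langle h_1(z),z\rangle\le 0$, so $h_t\in\Un$. For compactness I would use that $\HB$ is a Fr\'echet space, hence metrizable, so that it is enough to show that $\Un$ is closed under locally uniform convergence and is a normal family (that is, locally bounded).

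Closedness is soft: if $h_k\in\Un$ and $h_k\to h$ locally uniformly, then $h\in\HB$, $h(0)=\lim_k h_k(0)=0$, the Cauchy estimates force $Dh(0)=\lim_k Dh_k(0)=-\id$, and for each fixed $z\in\B^n$ the inequality $\Re\langle h(z),z\rangle\le 0$ passes to the limit; thus $h\in\Un$. For the normal family property I would reduce to the one--variable Carath\'eodory class. Given $h\in\Un$ and a unit vector $v\in\partial\B^n$, put $p_v(\zeta):=-\langle h(\zeta v),v\rangle$ for $\zeta\in\D$. This is holomorphic, vanishes at $0$ with $p_v'(0)=1$, and the defining inequality of $\Un$ becomes $\Re\bigl(p_v(\zeta)/\zeta\bigr)\ge 0$; hence $q_v:=p_v(\zeta)/\zeta$ lies in the Carath\'eodory class with $q_v(0)=1$. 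The classical bound $|q_v(\zeta)|\le(1+|\zeta|)/(1-|\zeta|)$ then gives $|\langle h(z),z\rangle|\le\|z\|^2(1+\|z\|)/(1-\|z\|)$ on $\B^n$, and comparing Taylor coefficients also gives $|\langle P_k(v),v\rangle|\le 2$ for all $k\ge 2$, where $P_k$ denotes the $k$--th homogeneous term of $h$.

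The remaining step---which I expect to be the only genuinely delicate one, since this is precisely where several variables differ from one---is to upgrade this control of the ``diagonal'' quantities $\langle h(z),z\rangle$ to a bound on the full vector norm $\|h(z)\|$ that is uniform over $\Un$; in dimension one this is trivial because one simply writes $h(z)=z\,q(z)$. I would derive it from the known growth estimate $\|h(z)\|\le\|z\|(1+\|z\|)/(1-\|z\|)$ for mappings in $-\Un=\M$ (\cite{GK2003}); alternatively one argues directly, reconstructing each homogeneous polynomial map $P_k$ from the bihomogeneous form $v\mapsto\langle P_k(v),v\rangle$ by polarisation and estimating, which yields $\|h(z)\|\le M(\|z\|)<\infty$ for $\|z\|\le r<1$ with $M(r)$ independent of $h\in\Un$. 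Once such a locally uniform bound is available, Montel's theorem applied to the components exhibits $\Un$ as a normal family, and together with the closedness established above this shows that $\Un$ is a compact subset of $\HB$.
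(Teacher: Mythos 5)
The paper provides no proof of this statement; it simply cites \cite[Theorem 6.1.39]{GK2003}. So there is nothing internal to compare against, and the question is whether your argument is correct and whether it reflects the standard (Graham--Kohr) approach. It largely does.

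Your handling of convexity and of closedness under locally uniform limits is correct and is the standard soft argument. The reduction to the one--variable Carath\'eodory class is also the right move and is exactly what is used in \cite{GK2003}: for each unit vector $v$ the function $q_v(\zeta)=-\langle h(\zeta v),v\rangle/\zeta$ has $q_v(0)=1$ and $\Re q_v\ge 0$, giving $|\langle h(z),z\rangle|\le \|z\|^2(1+\|z\|)/(1-\|z\|)$ and the coefficient bounds $|\langle P_k(v),v\rangle|\le 2$.

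Two remarks on the last, decisive step. First, the estimate you quote from \cite{GK2003}, namely $\|h(z)\|\le\|z\|(1+\|z\|)/(1-\|z\|)$, is not the higher--dimensional growth bound; that formula is the one--variable Carath\'eodory estimate for the scalar quantity $\langle h(z),z\rangle$. The norm bound actually proved in \cite{GK2003} for $h\in\M$ has a worse denominator, roughly $\|h(z)\|\le 4\|z\|/(1-\|z\|)^2$. This is harmless for your purpose, since any locally uniform majorant does the job, but you should cite the correct lemma rather than transplant the one--variable constant. Second, your alternative polarisation route is sound in principle but is stated too briefly to count as a proof. The point is that the sesquiholomorphic form $\tilde B_k(z,w):=\langle P_k(z),w\rangle$, holomorphic of degree $k$ in $z$ and antiholomorphic of degree $1$ in $w$, is uniquely determined by its diagonal restriction $B_k(z)=\tilde B_k(z,z)$, and one recovers the components via $P_{k,j}=\partial B_k/\partial\overline{z_j}$; Cauchy estimates on a torus then bound the monomial coefficients of $P_k$ by $\sup|B_k|$, and summing the homogeneous pieces gives the locally uniform bound. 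Written out, this is essentially the content of the relevant lemma in \cite{GK2003}; as sketched, it identifies the gap correctly but does not yet close it.

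Overall: correct in structure and essentially the route the cited source takes, with one mis-quoted estimate (inessential) and one step (norm bound from the diagonal bound) that is flagged but not carried out.
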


\begin{definition}
Let $\R^+_0:=\{t \in \R\, : \, t\ge 0\}$.
 A \textit{Herglotz vector field in the class $\Un$} is a mapping $G : \B^n
 \times \R^+_0 \to \C^n$ such that
\begin{itemize}
\item[(i)] $G(z,\cdot)$ is measurable on $\R^+_0$ for every $z \in \B^n$, and
\item[(ii)] $G(\cdot,t) \in \Un$ for a.e.~$t \in \R^+_0$.
\end{itemize}
\end{definition}

\begin{theorem}[The Loewner Equation] \label{thm:loewner}
Let $G$ be a Herglotz vector field in the class $\Un$.
Then for any $z \in \B^n$ there is a unique solution 
 $\R^+_0 \ni t \mapsto \varphi(t,z) \in \B^n$  of the initial value problem
\begin{equation} \label{eq:L} \begin{array}{rcl}
\displaystyle \frac{\partial \varphi}{\partial t} (t,z)&=& G(\varphi(t,z),t) \quad \text{ for a.e. } t \ge 0 \,, \\[3mm]
\varphi(0,z)&=&z \, .
\end{array}
\end{equation}
For each $t \ge 0$, the mapping $e^t \varphi(t,\cdot) : \B^n \to \C$ belongs to
$\S_n$ and the 
limit
$$f^G:=\lim_{t \to \infty} e^t \varphi(t,\cdot)  $$ exists locally uniformly in
$\B^n$ and belongs to $\S_n$.  
\end{theorem}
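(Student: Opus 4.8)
\textbf{Proof proposal for Theorem \ref{thm:loewner}.}

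The plan is to treat the Loewner equation as a non-autonomous Carath\'eodory ODE on the Banach space $\C^n$ and then carefully extract the limit map. First I would verify that the right-hand side $(z,t)\mapsto G(z,t)$ satisfies the Carath\'eodory hypotheses locally in $z$: by construction $G(z,\cdot)$ is measurable in $t$ and $G(\cdot,t)\in\Un$ for a.e.\ $t$, and the compactness of $\Un$ from Theorem \ref{thm:classm} provides, via the Cauchy estimates on a fixed ball $\overline{\B^n(0,r)}$, a uniform Lipschitz constant $L_r$ (independent of $t$) for $G(\cdot,t)$ on that ball together with a uniform bound $|G(z,t)|\le M_r$. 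Hence the hypotheses of the Carath\'eodory existence--uniqueness theorem hold, yielding for each fixed $z\in\B^n$ a unique locally absolutely continuous solution $t\mapsto\varphi(t,z)$ of \eqref{eq:L} on a maximal interval; the a priori bound $\|\varphi(t,z)\|\le 1$ shows solutions do not leave $\B^n$, so the maximal interval is all of $\R^+_0$ once we know the solution cannot approach $\partial\B^n$ in finite time. That last point follows from the defining inequality $\Re\langle G(w,t),w\rangle\le 0$: writing $\rho(t):=\|\varphi(t,z)\|^2$ one computes $\rho'(t)=2\Re\langle G(\varphi,t),\varphi\rangle\le 0$ for a.e.\ $t$, so $\rho$ is nonincreasing and $\|\varphi(t,z)\|\le\|z\|<1$ for all $t\ge 0$; in particular the solution stays in a fixed compact subset of $\B^n$ and is therefore global.

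Next I would show $e^t\varphi(t,\cdot)\in\Sn$ for each fixed $t$. Univalence follows from uniqueness of solutions of \eqref{eq:L}: if $\varphi(t,z_1)=\varphi(t,z_2)$ then running the ODE backward in time (again using Carath\'eodory uniqueness, now for the reversed flow, which is legitimate on the compact set where the solutions live) forces $z_1=z_2$, so $\varphi(t,\cdot)$ is injective and hence so is $e^t\varphi(t,\cdot)$. Holomorphy in $z$ is obtained by differentiating \eqref{eq:L} in $z$ and invoking the standard theorem on holomorphic dependence of ODE solutions on parameters/initial data, or alternatively by a Vitali-type argument: $\varphi(t,\cdot)$ is a locally bounded family that is a locally uniform limit of holomorphic Picard iterates. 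The normalization $\varphi(0,z)=z$ gives $e^0\varphi(0,\cdot)=\id$; to get $D\big(e^t\varphi(t,\cdot)\big)(0)=\id$ one solves the linearized equation $\tfrac{\partial}{\partial t}D\varphi(t,0)=DG(0,t)\,D\varphi(t,0)$ with $DG(0,t)=-\id$, whose solution is $D\varphi(t,0)=e^{-t}\id$, so $D\big(e^t\varphi(t,\cdot)\big)(0)=\id$ as required.

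For the existence of the limit $f^G=\lim_{t\to\infty}e^t\varphi(t,\cdot)$ I would first establish that the family $\{e^t\varphi(t,\cdot)\}_{t\ge 0}$ is locally uniformly bounded in $\B^n$; this is where the real work lies. The standard route is to show that $t\mapsto e^t\varphi(t,z)$ is, for each fixed $z$, a normal family issue controlled by the growth theorem for the class $\Sn$ together with the monotonicity encoded in the semigroup/evolution property of the Loewner chain: writing $v_{s,t}$ for the transition maps one has $\varphi(t,\cdot)=v_{0,t}$, and the subordination chain $f_t:=e^t\varphi(t,\cdot)^{-1}$--style argument (more precisely, one shows $e^s\varphi(s,\cdot)=e^t\varphi(t,\cdot)\circ(e^{-s}\text{-normalized transition})$ so that the maps $e^t\varphi(t,\cdot)$ form an increasing family of subordinated univalent maps) gives uniform bounds on compacta from the classical distortion estimates applied slice-wise or from the known growth theorem $\|f(z)\|\le\|z\|/(1-\|z\|)^2$ valid on $\Sn$. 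Granting the local uniform bound, Montel's theorem yields a normal family; to upgrade subsequential convergence to genuine convergence one uses that $\{e^t D\varphi(t,0)\}=\{\id\}$ pins the derivative at $0$ and that the evolution property forces any two subsequential limits to coincide. The main obstacle is precisely this compactness/convergence step: in one variable it is immediate from the growth theorem and the monotone behaviour of $|\varphi(t,z)|$, but in several variables one must invoke the (nontrivial) fact that mappings with parametric representation satisfy the growth estimate and that the transition maps of a Loewner chain behave monotonically--this is exactly the content of the theory developed in \cite{GK2003}, and I would cite Theorem 8.1.5 (or its analogue there) rather than reprove it. Finally $f^G\in\Sn$ because $\Sn$ is closed under locally uniform limits that preserve the normalization and univalence is preserved since the approximants are all univalent with a common nontrivial lower Jacobian bound at the origin (forced by $De^t\varphi(t,0)=\id$), which by Hurwitz-type reasoning in several variables rules out degeneration to a non-injective limit.
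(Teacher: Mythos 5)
The paper itself offers no proof of Theorem~\ref{thm:loewner}: it simply points to \cite[Thm.~8.1.5]{GK2003}, so there is no internal argument to match your proposal against. Your treatment of existence, uniqueness, global forward-invariance of $\B^n$ via $\rho(t)=\|\varphi(t,z)\|^2$ with $\rho'\le 0$, injectivity by backward uniqueness, holomorphy in $z$, and the computation $D\varphi(t,0)=e^{-t}\id$ are all sound and essentially standard.

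The gap is concentrated exactly where you flag ``the real work.'' You invoke ``the known growth theorem $\|f(z)\|\le\|z\|/(1-\|z\|)^2$ valid on $\Sn$'' to get local uniform boundedness of $\{e^t\varphi(t,\cdot)\}$. That estimate is \emph{false} on $\Sn$ for $n\ge 2$: the shear maps $z\mapsto(z_1+\alpha z_2^2,z_2,\ldots,z_n)$ lie in $\Sn$ for every $\alpha\in\C$, so no growth estimate holds on $\Sn$ and no distortion theorem can be ``applied slice-wise'' either (slices of a univalent map of $\B^n$ need not be univalent). The growth theorem does hold on the smaller class $\S^0_n$, but appealing to it here is circular: $\S^0_n$ is defined precisely as the set of limits $f^G$, whose existence is what Theorem~\ref{thm:loewner} is asserting, and the citation you propose, \cite[Thm.~8.1.5]{GK2003}, is exactly the theorem under proof. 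The correct route is to derive the needed bound directly from the ODE: for $h\in\Un$ one has, by a Herglotz/Harnack-type argument at the level of the function $w\mapsto\langle -h(w),w\rangle/\|w\|^2$, the two-sided inequality
\begin{equation*}
\|z\|^2\,\frac{1-\|z\|}{1+\|z\|}\ \le\ \Re\langle -h(z),z\rangle\ \le\ \|z\|^2\,\frac{1+\|z\|}{1-\|z\|},\qquad z\in\B^n\setminus\{0\}.
\end{equation*}
Feeding this into $\rho'(t)=2\Re\langle G(\varphi,t),\varphi\rangle$ and integrating yields that $t\mapsto e^t\|\varphi(t,z)\|/(1+\|\varphi(t,z)\|)^2$ is nondecreasing and $t\mapsto e^t\|\varphi(t,z)\|/(1-\|\varphi(t,z)\|)^2$ is nonincreasing, whence $e^t\|\varphi(t,z)\|\le \|z\|/(1-\|z\|)^2$ uniformly on compacta. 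This gives normality; the same monotone quantities (together with the semigroup property $\varphi(t,\cdot)=v_{s,t}\circ\varphi(s,\cdot)$ and a Cauchy-in-$t$ estimate derived from the upper inequality) give convergence of the whole family rather than merely of subsequences, and the lower inequality rules out degeneration of the limit, so a Hurwitz-type argument yields univalence. In short: replace the nonexistent growth theorem on $\Sn$, and the circular reference, by this direct ODE estimate, and the proof closes.
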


We refer to \cite[Thm.~8.1.5]{GK2003} for the proof. The differential equation
in (\ref{eq:L}) is the \textit{Loewner equation} (in $\C^n$). It induces a map
from the set of all Herglotz vector fields in the class $\Un$ into the set $\S_n$.
The range of this map plays a crucial role in this paper:

\begin{theorem} \label{thm:s0}
The set
$$ \mathcal{S}_n^0:=\left\{f^G \in \HB \, | \, G \text{ Herglotz vector
  field  in the class } \Un \right\}  $$
is a compact subset of $\HB$ for each $n\in \N$, and $\S_1^0=\S$.
\end{theorem}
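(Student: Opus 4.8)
The plan is to establish the two assertions separately, starting with the identity $\S_1^0=\S$ since it anchors the whole picture. In dimension one, a classical theorem of Loewner (and Pommerenke) states precisely that every $f\in\S$ is the limit $f=\lim_{t\to\infty}e^t\varphi(t,\cdot)$ of a Loewner chain driven by a Herglotz function, and conversely every such limit is in $\S$. I would recall that in $\C^1$ the class $\Un=-\M$ reduces to the maps $h(z)=-z\,p(z)$ with $p$ holomorphic on $\D$, $p(0)=1$, $\Re p>0$ (the classical Herglotz/Carath\'eodory class), so that a Herglotz vector field in $\Un$ is exactly a classical Loewner driving term $p(z,t)$; then citing the one-variable Loewner representation theorem (e.g.~Pommerenke) gives $\S_1^0=\S$ directly, with no further work.

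For the compactness of $\S_n^0$ in the Fr\'echet space $\HB$, the key structural fact I would use is that $\S_n^0$ is the image of the set $\mathcal{H}$ of all Herglotz vector fields in the class $\Un$ under the \emph{solution map} $\Phi:\mathcal{H}\to\HB$, $\Phi(G)=f^G$, provided by Theorem \ref{thm:loewner}. The strategy is the standard ``compactness of the parameter set plus continuity of the solution map'' argument. First I would equip $\mathcal{H}$ with a suitable topology: since $\Un$ is a compact convex subset of the Fr\'echet space $\HB$ (Theorem \ref{thm:classm}), each $G(\cdot,t)$ ranges in a fixed compact set, and one regards $\mathcal{H}$ as a subset of $L^1_{\mathrm{loc}}(\R^+_0,\HB)$ (or, equivalently, works with the weak-$*$ type topology coming from pairing against test functions in $t$ and evaluating at points of $\B^n$). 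The first step is then to show $\mathcal{H}$ is sequentially compact in this topology: given a sequence $G_m$, a diagonal/Banach--Alaoglu argument extracts a subsequence converging weakly, and the convexity and compactness of $\Un$ guarantee the limit again satisfies $G(\cdot,t)\in\Un$ for a.e.~$t$ and the measurability condition — so the limit lies in $\mathcal{H}$.

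The heart of the proof — and the step I expect to be the main obstacle — is the \emph{continuity of the solution map} $G\mapsto f^G$: if $G_m\to G$ in the topology on $\mathcal{H}$, then the solutions $\varphi_m(t,z)$ of the initial value problem \eqref{eq:L} converge locally uniformly in $(t,z)$ to $\varphi(t,z)$, and moreover the limits $e^t\varphi_m(t,\cdot)$ converge to $f^{G}$ locally uniformly on $\B^n$ \emph{uniformly in} $t$, so that one may interchange $\lim_m$ and $\lim_{t\to\infty}$. The convergence of the $\varphi_m$ on compact $t$-intervals is a continuous-dependence result for Carath\'eodory ODEs and follows from Gronwall-type estimates once one controls $\int_0^T(G_m(\varphi_m,t)-G(\varphi,t))\,dt$; the subtlety is that the convergence $G_m\to G$ is only weak in $t$, so one must use equicontinuity in $t$ of $t\mapsto\varphi_m(t,z)$ (which comes from the uniform bound on $\Un$) together with the fact that the maps in $\Un$ are uniformly Lipschitz on compact subsets of $\B^n$ (a consequence of the Cauchy estimates applied to the compact family $\Un$). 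The uniform-in-$t$ control needed to pass to the limit in $f^G=\lim_t e^t\varphi(t,\cdot)$ is the genuinely delicate point; here one invokes the distortion/growth bounds valid for the whole compact family — since every $e^t\varphi(t,\cdot)\in\S_n$ and, more to the point, $\S_n^0$ is contained in a fixed bounded-distortion subclass of $\S_n$ — to get that $e^t\varphi(t,\cdot)$ is a locally uniform Cauchy family as $t\to\infty$, with rate independent of $G\in\mathcal{H}$. Granting this, $\Phi$ is continuous, $\S_n^0=\Phi(\mathcal{H})$ is the continuous image of a (sequentially) compact set, hence compact, and $\S_n^0$ is closed in $\HB$. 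I would note that all of these ingredients are worked out in detail in Graham--Kohr \cite{GK2003}, so in the expository spirit of the paper one can legitimately cite \cite[Thm.~8.1.5 and its consequences]{GK2003} for the continuity and compactness, reducing the proof to the assembly of these pieces plus the one-variable identification above.
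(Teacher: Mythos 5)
The paper offers no proof of this theorem: it simply cites \cite[Corollary 8.3.11]{GK2003} for the compactness of $\mathcal{S}_n^0$ and Pommerenke \cite{Pommerenke1965} (see also \cite[Ch.~6.1]{Pommerenke1975}) for $\mathcal{S}_1^0=\mathcal{S}$, and the introduction explicitly says the article is expository and contains essentially no proofs. So there is nothing of the paper's to compare your argument against line by line; what can be said is that your plan is a faithful high-level reconstruction of the argument in the cited references. The identification of $\mathbb{U}_1$ with $\{-zp(z): p\ \text{Carath\'eodory}\}$ and the appeal to Pommerenke's representation theorem for $\mathcal{S}_1^0=\mathcal{S}$ are exactly right, and the ``weak compactness of controls $+$ continuity of the solution map'' scheme is the standard route to compactness of $\mathcal{S}_n^0$.

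Two small points worth sharpening if you were to write this out. First, the compactness of the set of Herglotz vector fields is genuinely a \emph{weak} compactness in $L^1_{\mathrm{loc}}(\R^+_0,\HB)$; the parenthetical ``or, equivalently'' makes it sound as if the strong $L^1_{\mathrm{loc}}$ and weak topologies were interchangeable here, which they are not -- only the weak topology gives compactness, and the whole delicacy of the continuity step stems from working against it, as you correctly note. Second, the ``uniform-in-$t$'' control on $e^t\varphi(t,\cdot)$ that lets you pass $\lim_m$ and $\lim_{t\to\infty}$ past one another is best phrased as coming directly from an a priori growth estimate for solutions of the Loewner ODE (valid uniformly over all admissible $G$, derived from the lower bound on $\Re\langle -h(z),z\rangle$ for $h\in\mathbb{U}_n$), rather than from ``$\mathcal{S}_n^0$ being contained in a fixed bounded-distortion subclass of $\mathcal{S}_n$'' -- the latter formulation reads as if it presupposes properties of $\mathcal{S}_n^0$ that one is in the middle of establishing. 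With those two clarifications your outline matches what is worked out in \cite{GK2003}.
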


We refer to \cite[Corollary 8.3.11]{GK2003} for a proof of the first
statement. The fundamental fact that $\S_1^0=\S$ is a result of Pommerenke
\cite{Pommerenke1965}, see also \cite[Chapter 6.1]{Pommerenke1975}.
We also note that  $e^t\varphi(t,\cdot) \in \mathcal{S}_n^0$ for all
$t \ge 0$ for every solution to (\ref{eq:L}), see e.g.~\cite[Lemma
2.6]{Schleissinger2014}.
The class $\mathcal{S}_n^0$ is exactly the class of mappings in
$\HB$ which have a \textit{parametric representation} as introduced by Graham,
Hamada and Kohr \cite[Definition 1.5]{GHK2002}, see also 
 \cite{GHK2008,GK2003}.

\medskip

Since the class $\S^0_n$ is compact, we can ask for sharp coefficient bounds
as in the one--dimensional case. More precisely,  let $f \in \S_n^0$. We write $f=(f_1,\ldots, f_n)$ with $f_j \in 
\text{Hol}(\B^n,\C)$, and consider the coefficient functionals 
$$ J_{\alpha}(f):= a_{\alpha} \, , \qquad 
f_1(z)=z_1+\sum \limits_{\alpha \in \N^n_0, |\alpha| \ge 2} a_{\alpha}\,
z^{\alpha}  \, .$$
Here $\alpha=(\alpha_1,\ldots, \alpha_n) \in \N^n_0$ denotes a multi--index
and $|\alpha|=\alpha_1+\ldots+\alpha_n$. In view of the Bieberbach
conjecture, it is natural to consider the extremal problem
\begin{equation} \label{eq:bieberbachgen}
 \max \limits_{f \in \S^0_n} \Re J_{\alpha}(f) 
\end{equation}
for each multi--index $\alpha \in \N_0^N$ such that $|\alpha| \ge 2$.
As in the one--dimensional case, we call a mapping  
$F \in \S^0_n$  extremal for the functional $\Re J_{\alpha}$ over $\S^0_n$, 
if 
$$ \Re J_{\alpha}(F)=\max \limits_{f \in \S^0_n} \Re J_{\alpha}(f) \, .$$

\medskip

We can now formulate the problem.

\begin{problem} \label{prob:main}
For a multi--index $\alpha \in \N_0^N$ with  $|\alpha| \ge 2$
 find a necessary condition for an extremal mapping 
$F \in \S^0_n$  for the functional $\Re J_{\alpha}$ over $\S^0_n$. For the
special case $n=1$ this condition should reduce to the Schiffer differential
equation (\ref{eq:schiffer}). In addition, this necessary condition should be
a sufficient condition for extremality under suitable side conditions.
\end{problem}

\section{Control--theoretic interpretation 
of the Loewner equation}

In this section we give an interpretation of the Loewner equation as an
infinite--dimensional \textit{control system} in the Fr\'echet space $\HB$.
For this purpose, we gradually begin to change notation.

\begin{remhist}
It seems that Loewner himself was the first who came up with the idea of applying
 methods from optimal control theory to the Loewner equation. In 1967, his last student G.S.~Goodman
 \cite{Goodman1967} combined Loewner's theory with the then new Pontryagin
 Maximum Principle. Since then this approach has been used by many others, see
 e.g.~\cite{Popov1969, Alexandrov1976,
   FriedlandSchiffer1976, FriedlandSchiffer1977} and in particular the
important  contributions of D.V.~Prokhorov \cite{Prokhorov1984, Prokhorov1990,
   Prokhorov1993, Prokhorov2002}. Recent applications of optimal control
 methods to univalent functions can be found e.g.~in \cite{ProkhorovSamsonova2015, KochSchleissinger2016}.
\end{remhist}

\begin{definition}[Admissible controls, control set]
Let $G$ be a Herglotz vector field in the class $\Un$. We call
the mapping
$$ u=u_G : \R^+_0 \to \Un \, , \quad t \mapsto G(\cdot,t)\, , $$
an \textit{admissible control} (for the Loewner equation) and 
denote by $\mathcal{U}_n$ the collection of all admissible controls. The class
$\Un$ is called the \textit{control set} (of the Loewner equation).
\end{definition}

\begin{remark}[Herglotz vector fields as $\HB$--valued measurable controls]
One might think of an admissible control as a measurable mapping defined on 
the time interval $\R^+_0=[0,\infty)$ and with values in the control set
$\Un$, which is a subset of the infinite dimensional Fr\'echet space $\HB$.
\end{remark}
\medskip

We continue to change notation and  denote in the sequel the solutions
$\varphi(t,\cdot)$ of the Loewner equation (\ref{eq:L}) by $\varphi_t$.  Note that $\varphi_t$
always belongs to the composition semigroup 
$$ \Sn:=\big\{ \varphi \in \HB \, : \, \varphi(\B^n) \subseteq \B^n, \, \varphi(0)=0\big\} \,$$
of all holomorphic selfmaps of the unit ball $\B^n$.

\begin{definition}[Trajectories, state space]
Let $u \in \mathcal{U}_n$ be an admissible control. Denote by $G$   the 
 Herglotz vector field in the class $\Un$ such that $u=u_G$ and let
 $t \mapsto \varphi_t$ be the solution of the Loewner equation (\ref{eq:L}) corresponding
 to $G$. Then we call the curve 
$$x :=x_u : \R_0^+ \to \Sn \, , \quad t
\mapsto x_u(t):=\varphi_t\, , $$
the \textit{trajectory (of the Loewner equation) for $u$}. 
 The set $\Sn$ is
called the \textit{state space} (of the Loewner equation).
\end{definition}

\begin{remark}[Solutions of the Loewner equation as  $\Sn$--valued curves]
One  might think of an trajectory $x : \R^+_0 \to \Sn$  of the Loewner equation
as an  a.e.~differentiable (and absolutely
continuous) curve defined on the time interval $\R^+_0=[0,\infty)$ and with values in
the state space $\Sn$. The initial point of the curve is the identity
map.
\end{remark}
\medskip

In fact, this remark needs clarification.
If  $x=x_u : \R^+_0 \to \Sn$ is a trajectory of the Loewner equation for $u=u_G$, then
for each $z \in \B^n$ there is set $N_z \subseteq \R^+_0$ of measure zero such
that the solution $t \mapsto \varphi(t,z)$ of (\ref{eq:L}) is differentiable
on $\R^+_0\setminus N_z$ and the Loewner equation holds  for each $t \in \R^+_0\setminus
N_z$. A normal family argument shows that there is in fact a set $N \subseteq
\R^+_0$ of measure zero, which does not depend on $z$, such that the Loewner
equation (\ref{eq:L}) holds for each $t \in \R^+_0\setminus
N$ and each $z \in \B^n$. In addition,
$$ \overset{\bullet}{x}(t):= \frac{\partial \varphi_t}{\partial t}  \in \HB \quad \text{ for
  every } t \in \R^+_0 \setminus N \, .$$
Note that 
$$ \overset{\bullet}{x}(t)= \frac{\partial \varphi}{\partial t}(t,\cdot)=G(\varphi(t,\cdot),t)=u(t)
\circ x(t) \, , \quad t \in \R^+_0\setminus N \, . $$

\begin{conclusion}[The Loewner equation as a control system on $\HB$]
The Loewner equation has the following embarassingly simple form
\begin{equation} \label{eq:L1a} \boxed{\begin{array}{rcl}
\overset{\bullet}{x}&=& u \circ x \, , \\[2mm] x(0)&=&\id \, .
\end{array}}
\end{equation}
This is a control system on the infinite dimensional Fr\'echet
space $\HB$ with state space $\Sn$ and control set $\Un$.
\end{conclusion}

\begin{remhist}
The simple form (\ref{eq:L1a}) of the  Loewner equation is due to
E.~Schippers \cite[Proposition 6]{Schippers2006}.
\end{remhist}

\begin{remark}
If we define $$ g : \Sn \times \Un \to \HB \, , \qquad g(x,u):=u \circ x \, , $$ 
then the Loewner equation (\ref{eq:L1a}) takes the ``traditional'' form of a control
system:
\begin{equation} \label{eq:L2} \begin{array}{rcl}
\overset{\bullet}{x}(t)&=& g(x(t),u(t)) \quad \text{ for a.e. } t \ge 0 \,
, \\[2mm] x(0)&=&\id \, .
\end{array}
\end{equation}
\end{remark}

Note that the Loewner equation (\ref{eq:L1a}) is a control system which is
\textbf{linear with respect to the control}.

\medskip

 We can now consider  the \textit{reachable set for time $t$} of the Loewner
equation, that is, the set $ \{x_u(t) \, : \, u \in \mathcal{U}_n \}$.
However, in view of $e^t x_u(t) \in \S^0_n$ for each $t \ge 0$ and each $u \in
\mathcal{U}_n$ (see Theorem \ref{thm:s0}), we strongly prefer to slightly abuse language
and call
$$ \mathcal{R}(t):=
\left\{ e^t x_u(t) \, : \,  u \in \mathcal{U}_n \right\}
 , \quad t\ \in \R^+_0, \quad \text{ resp. }  \quad \mathcal{R}(\infty):=
\left\{ \lim \limits_{t \to \infty} e^t x_u(t) \, : \,  u \in \mathcal{U}_n \right\}
$$
the reachable set for time $t \in \R^0_+ \cup \{\infty\}$ and 
$$ \mathcal{R}:=\bigcup \limits_{t \in [0,\infty]} \mathcal{R}(t) \,  $$
the \textit{overall reachable set} of the Loewner equation.

\medskip

Using these notions, our considerations can be summarized as follows.

\begin{theorem}[$S^0_n=$ reachable set of the Loewner equation] \label{thm:ll1}
$$ S_n^0=\mathcal{R}=\mathcal{R}(\infty) \, .$$
\end{theorem}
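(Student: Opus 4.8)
The plan is to establish both equalities through a short chain of inclusions, the point being that $\mathcal{R}(\infty)$ is, after unravelling the definitions, nothing but the set $\mathcal{S}_n^0$ itself.

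First I would record that $\mathcal{R}(\infty)=\mathcal{S}_n^0$ essentially by definition. Indeed, if $u \in \mathcal{U}_n$ with associated Herglotz vector field $G$ (so $u=u_G$), then by the definition of the trajectory $x_u(t)=\varphi_t=\varphi(t,\cdot)$, where $\varphi$ solves the Loewner equation (\ref{eq:L}) for $G$; hence $\lim_{t\to\infty} e^t x_u(t)=\lim_{t\to\infty} e^t\varphi(t,\cdot)=f^G$, and by Theorem \ref{thm:s0} this limit exists, lies in $\mathcal{S}_n^0$, and as $u$ ranges over $\mathcal{U}_n$ (equivalently, as $G$ ranges over all Herglotz vector fields in the class $\Un$) it exhausts $\mathcal{S}_n^0$. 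Since trivially $\mathcal{R}(\infty)\subseteq \mathcal{R}=\bigcup_{t\in[0,\infty]}\mathcal{R}(t)$, it then remains only to show $\mathcal{R}(t)\subseteq \mathcal{S}_n^0$ for every finite $t\ge 0$, i.e.\ that $e^t x_u(t)\in\mathcal{S}_n^0$ for all $t\ge 0$ and all admissible $u$.

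This is the only step with genuine content, and I would prove it by a \emph{freeze the control} argument. Fix $u\in\mathcal{U}_n$ with Herglotz vector field $G$ and fix $t_0\ge 0$. Define $\tilde G(z,t):=G(z,t)$ for $t\le t_0$ and $\tilde G(z,t):=-z$ for $t>t_0$; the selfmap $z\mapsto -z$ belongs to $\Un$ (it fixes $0$, has differential $-\id$ there, and $\Re\langle -z,z\rangle=-\|z\|^2\le 0$), so $\tilde G$ is again a Herglotz vector field in the class $\Un$, with corresponding admissible control $\tilde u$. By uniqueness in Theorem \ref{thm:loewner} the solution $\tilde\varphi_t$ of (\ref{eq:L}) for $\tilde G$ agrees with $\varphi_t$ on $[0,t_0]$, while for $t>t_0$ it satisfies $\partial\tilde{\varphi}/\partial t=-\tilde{\varphi}$, whence $\tilde\varphi_t=e^{-(t-t_0)}\varphi_{t_0}$. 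Therefore $e^t\tilde\varphi_t=e^{t_0}\varphi_{t_0}$ for every $t\ge t_0$, and letting $t\to\infty$ gives $f^{\tilde G}=e^{t_0}\varphi_{t_0}=e^{t_0}x_u(t_0)$. Hence $e^{t_0}x_u(t_0)=f^{\tilde G}\in\mathcal{S}_n^0$, and since $u$ and $t_0$ were arbitrary, $\mathcal{R}(t_0)\subseteq\mathcal{S}_n^0$ for all $t_0\ge 0$. (Alternatively this is precisely \cite[Lemma 2.6]{Schleissinger2014}, already quoted above.)

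Putting the pieces together, $\mathcal{R}=\bigcup_{t\in[0,\infty]}\mathcal{R}(t)\subseteq \mathcal{S}_n^0=\mathcal{R}(\infty)\subseteq \mathcal{R}$, so all three sets coincide. I do not anticipate a serious obstacle here; the only delicate bookkeeping is to confirm that the spliced field $\tilde G$ still meets the measurability requirement and lies pointwise in $\Un$ on all of $\R^+_0$, and that the uniqueness part of Theorem \ref{thm:loewner} legitimately forces $\tilde\varphi_t=\varphi_t$ on $[0,t_0]$ — both of which are immediate from the results already assembled.
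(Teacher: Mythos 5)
Your proof is correct and follows the same logic the paper leaves implicit: $\mathcal{R}(\infty)=\mathcal{S}_n^0$ is immediate from the definitions and Theorem \ref{thm:s0}, and the only substantive point is $\mathcal{R}(t)\subseteq\mathcal{S}_n^0$ for finite $t$, which the paper simply cites as \cite[Lemma 2.6]{Schleissinger2014}. You go a small step further by actually proving that lemma via the freeze-the-control construction (splicing $G$ with the radial field $z\mapsto -z$ after time $t_0$), which is exactly the standard argument and is carried out correctly; so the two are in complete agreement, with your version being slightly more self-contained.
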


\begin{remhist}
The idea of viewing the classes $\S_n^0$ as reachable sets of
the Loewner equation
has been pioneered by Prokhorov \cite{Prokhorov1990,
  Prokhorov1993, Prokhorov2002} for $n=1$ and by
Graham, Hamada, G.~Kohr and M.~Kohr \cite{GHKK2012,GHKK2014} for $n>1$.

\end{remhist}

With the help of Theorem \ref{thm:ll1},
extremal problems over the class $\S_n^0$ can be treated as
 optimal control problems. 
For the sake of simplicity, we again consider only extremal problems involving Taylor coefficients of univalent maps.

\begin{definition}
Let $\alpha \in \N_0^N$ be a multi--index with $|\alpha| \ge 2$. An admissible
control $u^* \in \mathcal{U}_n$ is called 
 an \textit{optimal control} for the functional $J_{\alpha} : \HB \to \C$
on $\S^0_n$ if the univalent mapping
$$ F_{u^*}:=\lim \limits_{t \to \infty} e^t x_{u^*}(t)  \in \S^0_n \, $$
 is extremal for $\Re J_{\alpha}$ over $\S^0_n$.
\end{definition}

Note carefully, that if $F \in \S^0_n$ is extremal for $\Re J_{\alpha}$ over $\S^0_n$,
then any admissible control $u : [0,\infty) \to \Un$ which generates
$F$ in the sense that $F=F_u$, is an optimal control for $J_{\alpha}$.
Problem \ref{prob:main} can now be stated in control theoretic terms.

\begin{problem} \label{prob:main2}
 Let $\alpha \in \N_0^N$ be a multi--index with $|\alpha| \ge 2$.
Find a necessary condition for an optimal control  for the functional
$J_{\alpha} : \HB \to \C$.   In addition, this necessary condition should be
a sufficient condition for extremality under suitable side conditions.
\end{problem}

\section{The Pontryagin Maximum Principle for the class $\S^0_n$}

The standard necessary condition for an optimal control is provided by 
the 
$$ \fboxsep5mm
\fbox{\textit{Pontryagin Maximum Principle,}} $$
see  \cite{BGP1956}. We first consider the standard finite dimensional version.

\begin{remark}[Prelude to the Pontryagin Maximum Principle]
Let $V$ be a  finite dimensional complex vector space. Suppose that the ``state
space'' $S$ is an open subset of $V$ and that the ``control set'' $U$  is a
compact subset of $V$.  Let $g  :
S \times U \to V$ be a $C^1$--function and let $x_0 \in S$ be a fixed
``initial'' state. 
\begin{itemize}
\item[(I)] \textit{The control system}

Consider 
\begin{equation} \label{eq:control} \begin{array}{rcl}
\overset{\bullet}{x}(t)&=& g(x(t),u(t)) \quad \text{ for a.e. } t \ge 0 \,
, \\[2mm] x(0)&=&x_0 \, ,
\end{array}
\end{equation}
and call a measurable control $u : \R_0^+ \to U$ \textit{admissible} (for the system
(\ref{eq:control})), if
the 
solution $x=x_u$ to (\ref{eq:control}) exists for all a.e.~$t \in \R^+_0$. 

\item[(II)] \textit{The optimal control problem}

Let $J  \in V^*$, that is, let $J : V \to \C$ be a $\C$--linear (continuous) functional and
let $T >0$ be fixed. 
We call an admissible control $u^* : \R_0^+ \to U$ an \textit{optimal control} for the functional
$ J : V \to \C$ on $S$ and time $T>0$, if 
\begin{equation} \label{ex:cc1}
\Re J(x_{u^*}(T)) = \max \big\{ \Re J(x_u(T)) \, : \, u : \R^+_0 \to U \text{ admissible}\big\}.
\end{equation}
\item[(III)] \textit{The adjoint equation}

Suppose that $u : \R_0^+ \to U$ is an admissible control and $x=x_u : \R^+_0 \to
S$ is the solution to (\ref{eq:control}). Then the
 linear (matrix) equation
\begin{equation} \label{eq:controladj} \begin{array}{rcl}
 \overset{\bullet}{\Phi}(t) &=& -\Phi(t) D_x g(x(t),u(t)) \, ,\\[2mm]
\Phi(T) &=& \id\, , 
\end{array}
\end{equation}
is called the \textit{adjoint equation} along $(x,u)$ at  $T$.
Note that $\Phi(t) \in \mathcal{L}(V)$, the set of linear (continuous)
endomorphisms of $V$.

\item[(IV)] \textit{The Hamiltonian}

The function $\mathcal{H} : S \times \mathcal{L}(V) \times U  \to \C$
defined by
$$ \mathcal{H}(x,\Phi,u):=J(\Phi \cdot g(x,u)) $$
is called the \textit{complex Hamiltonian} 
for the extremal problem (\ref{ex:cc1}).
Note that  if we denote the transpose of a map $\Phi \in \mathcal{L}(V)$ by
$\Phi_*$, then 
$$  \mathcal{H}(x,\Phi,u)=\Phi_*(J) g(x,u)$$
\end{itemize}
\end{remark}

We can now state the Pontryagin Maximum Principle:

\begin{theorem}[Pontryagin Maximum Principle] \label{thm:pmpgen}
Let $V$ be a finite dimensional complex vector space and $J \in V^*$. Suppose that $u^* : \R^+_0 \to U$ is an optimal control for $J : V \to \C$ on $S$ and time $T>0$.
Denote by $\Phi^* : \R^+_0 \to \mathcal{L}(V)$  the solution to the adjoint equation
(\ref{eq:controladj}) along $(x_{u^*},u^*)$ at $T$, then
$$ \Re \,\mathcal{H}(x_{u^*}(t),\Phi^*(t),u^*(t))=\max \limits_{u \in U} \Re\,
\mathcal{H}(x_{u^*}(t),\Phi^*(t),u)\quad \text{ for a.e. } t \in [0,T]\, .$$
\end{theorem}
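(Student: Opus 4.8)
To prove Theorem~\ref{thm:pmpgen} the plan is to run the classical \emph{needle variation} argument of Pontryagin; only $\Re J$, an $\R$--linear functional on $V$, enters the conclusion, so the complex structure of $V$ is immaterial for the argument. Write $x^*:=x_{u^*}$ for the solution of the control system (\ref{eq:control}) corresponding to $u^*$. Since $u^*(\tau)\in U$ for every $\tau$, the assertion of the theorem is equivalent to the following inequality: for a.e.\ $\tau\in[0,T]$ and every $v\in U$,
\begin{equation}\label{eq:pmp-goal}
\Re\,\mathcal{H}\big(x^*(\tau),\Phi^*(\tau),v\big)\;\le\;\Re\,\mathcal{H}\big(x^*(\tau),\Phi^*(\tau),u^*(\tau)\big).
\end{equation}
I would fix one $v\in U$ and one point $\tau\in(0,T)$ at a time, choosing $\tau$ to be a Lebesgue point of the bounded measurable map $t\mapsto g(x^*(t),u^*(t))$ on $[0,T]$; by the Lebesgue differentiation theorem almost every $\tau$ qualifies. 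Note that, since $x^*([0,T])$ is a compact subset of the open set $S$ and $U$ is compact, the $C^1$ hypothesis on $g$ supplies all the boundedness and local Lipschitz bounds used below.

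\emph{Needle variation at $\tau$.} For small $\eps>0$ let $u_\eps$ be the admissible control equal to the constant $v$ on $[\tau-\eps,\tau]$ and to $u^*$ elsewhere, and set $x_\eps:=x_{u_\eps}$. As $x_\eps\equiv x^*$ on $[0,\tau-\eps]$, a short--time existence estimate on $[\tau-\eps,\tau]$ followed by continuous dependence of Carath\'eodory solutions on the initial value on $[\tau,T]$ shows that $x_\eps$ is defined on all of $[0,T]$, stays in $S$, and tends to $x^*$ uniformly on $[0,T]$ as $\eps\downarrow0$. Integrating the defining differential equations of $x_\eps$ and $x^*$ over $[\tau-\eps,\tau]$ and using that $\tau$ is a Lebesgue point of $t\mapsto g(x^*(t),u^*(t))$ together with the continuity of $x^*$ yields the needle estimate
\begin{equation}\label{eq:pmp-needle}
x_\eps(\tau)=x^*(\tau)+\eps\,w_0+o(\eps),\qquad w_0:=g\big(x^*(\tau),v\big)-g\big(x^*(\tau),u^*(\tau)\big).
\end{equation}
On $[\tau,T]$ the controls $u_\eps$ and $u^*$ coincide, so linearizing the equation along $(x^*,u^*)$ and estimating the remainder with Gr\"onwall's inequality gives $x_\eps(t)=x^*(t)+\eps\,w(t)+o(\eps)$ uniformly on $[\tau,T]$, where $w$ is the solution of the variational equation
\begin{equation}\label{eq:pmp-var}
\overset{\bullet}{w}(t)=D_x g\big(x^*(t),u^*(t)\big)\,w(t)\quad\text{for a.e.\ }t\in[\tau,T],\qquad w(\tau)=w_0 ;
\end{equation}
in particular $x_\eps(T)=x^*(T)+\eps\,w(T)+o(\eps)$.

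\emph{Optimality and the adjoint equation.} Since $u^*$ is optimal, $\Re J\big(x_\eps(T)\big)\le\Re J\big(x^*(T)\big)$ for all small $\eps>0$; dividing by $\eps$ and letting $\eps\downarrow0$ gives $\Re J\big(w(T)\big)\le0$. To move this inequality back to time $\tau$, set $\psi(t):=J\big(\Phi^*(t)w(t)\big)$ on $[\tau,T]$. Then $\psi$ is absolutely continuous, and substituting the adjoint equation (\ref{eq:controladj}) and the variational equation (\ref{eq:pmp-var}) into the product rule gives, with $D_x g$ short for $D_x g(x^*(t),u^*(t))$ and the argument $t$ suppressed,
\begin{equation}\label{eq:pmp-cancel}
\overset{\bullet}{\psi}=J\big(\overset{\bullet}{\Phi^*}w+\Phi^*\overset{\bullet}{w}\big)=J\big(-\Phi^*(D_x g)w+\Phi^*(D_x g)w\big)=0\qquad\text{a.e.\ on }[\tau,T],
\end{equation}
so $\psi$ is constant on $[\tau,T]$. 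Hence, using $\Phi^*(T)=\id$ and the linearity of $\Phi^*(\tau)$ and of $J$,
\begin{equation}\label{eq:pmp-transport}
J\big(w(T)\big)=\psi(T)=\psi(\tau)=J\big(\Phi^*(\tau)\,w_0\big)=\mathcal{H}\big(x^*(\tau),\Phi^*(\tau),v\big)-\mathcal{H}\big(x^*(\tau),\Phi^*(\tau),u^*(\tau)\big).
\end{equation}
Taking real parts and combining with $\Re J(w(T))\le0$ gives (\ref{eq:pmp-goal}) for this $v$ and this $\tau$.

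\emph{Removing the dependence on $v$.} Pick a countable dense subset $\{v_j\}_{j\in\N}$ of the compact --- hence separable --- set $U$. For each $j$, the inequality (\ref{eq:pmp-goal}) with $v=v_j$ holds for all $\tau$ outside a set of measure zero; outside the countable union of these sets it therefore holds for every $v_j$, and then, by continuity of $u\mapsto\mathcal{H}\big(x^*(\tau),\Phi^*(\tau),u\big)$, for every $v\in U$. Since $u^*(\tau)\in U$, this is exactly the maximum condition of the theorem, valid for a.e.\ $\tau\in[0,T]$. \emph{The main obstacle} is the rigorous justification of the needle estimate (\ref{eq:pmp-needle})--(\ref{eq:pmp-var}): one has to control the perturbed solution on the shrinking interval $[\tau-\eps,\tau]$ and then propagate the perturbation across $[\tau,T]$ with an error that is genuinely $o(\eps)$, which rests on the Lebesgue--point choice of $\tau$, the local Lipschitz bounds for $g$ and $D_x g$ coming from the $C^1$ hypothesis and the compactness of $U$, and a Gr\"onwall argument. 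By contrast, the algebraic core of the proof --- the exact cancellation (\ref{eq:pmp-cancel}), which is precisely what the adjoint equation (\ref{eq:controladj}) is built to produce --- is then immediate.
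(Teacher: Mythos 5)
The paper itself does not prove Theorem~\ref{thm:pmpgen}: it simply points the reader to a standard optimal--control textbook (``For a proof \dots we refer to any textbook \dots, see e.g.~\cite[p.~152 ff.]{Zabczyk1992}''). Your argument supplies exactly the proof those references contain --- the classical needle--variation argument: perturb $u^*$ to the constant $v$ on a shrinking interval $[\tau-\eps,\tau]$ ending at a Lebesgue point $\tau$, read off the first--order perturbation $w_0$ of the state at time $\tau$, propagate it to time $T$ through the variational equation, and then exploit the fact that the adjoint equation (\ref{eq:controladj}) is precisely the cotransport making $t\mapsto J(\Phi^*(t)w(t))$ constant. The computations (\ref{eq:pmp-needle})--(\ref{eq:pmp-transport}) are correct as written, the cancellation (\ref{eq:pmp-cancel}) is exactly what the definition of the adjoint equation is designed to produce, and the reduction of the maximum condition to the inequality (\ref{eq:pmp-goal}) is the right reformulation. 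Two small remarks. First, the final paragraph on removing the $v$--dependence is in fact superfluous: the exceptional set of $\tau$'s in your argument is the set of non--Lebesgue points of $t\mapsto g(x^*(t),u^*(t))$, which does not depend on the comparison control $v$, so (\ref{eq:pmp-goal}) already holds for \emph{every} $v\in U$ at \emph{every} such Lebesgue point; the countable--density step does no harm but buys nothing. Second, you might make slightly more explicit that $x_\eps$ remains in the open set $S$ for small $\eps$: since $x^*([0,T])$ is a compact subset of $S$, one fixes a compact neighborhood $K\subset S$ of $x^*([0,T])$, uses the bound $\sup_{K\times U}\|g\|<\infty$ to keep $x_\eps$ inside $K$ on $[\tau-\eps,\tau]$ for small $\eps$, and then invokes continuous dependence on $[\tau,T]$; you gesture at this, and it is standard, but it is the one place where a reader could wish for a sentence more.
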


The conclusion of Theorem \ref{thm:pmpgen}  means that 
for a.e.~$t \in \R^+_0$ the value $u^*(t)$ of the optimal
control $u^*$ provides a maximum for  the function
$$ u \mapsto \Re \Phi^*(t)_*(J) g(x_*(t),u) \, 
$$
over the control set $U$. For a proof of Theorem \ref{thm:pmpgen} we refer to
any textbook on optimal control theory, see e.g.~\cite[p.~152 ff.]{Zabczyk1992}.

\begin{remark}[On the definition of the Hamiltonian/The costate equation]
Our definition of the Hamiltonian is slightly nonstandard.
However, it is easy to see its relation to the standard Hamiltonian formalism.
Note that for $V=\C^n$ we can identify the dual space $V^*$ with $V$ and 
we can write the $\C$--linear functional $J : V \to\C$
as $J(y)=\overline{\eta}^Ty$ for some vector $\eta
\in V$. Hence, if $\Phi : \R^+_0 \to \mathcal{L}(V)$ is the solution of the adjoint
equation (\ref{eq:controladj}) along $(x,u)$ at $T$, then
 $$\Psi:=\overline{\eta}^T \Phi=J(\Phi \, \cdot )=\Phi_*(J)  : \R^0_+ \to V^*$$
 is the solution to the so--called
\textit{costate equation}
\begin{equation} \label{eq:controladj2} \begin{array}{rcl}
 \overset{\bullet}{\Psi}(t) &=& -\Psi(t) D_x g(x(t),u(t)) \, ,\\[2mm]
\Psi(T) &=& \overline{\eta}^T\, . 
\end{array}
\end{equation}
Therefore, the  \textit{standard} complex Hamiltonian for the extremal problem (\ref{ex:cc1}),
\begin{equation} \label{eq:hamiltonstandard}
 H : S \times V^* \times U  \to \C \, , \quad 
H(x,\Psi,u):=\Psi^T g(x,u)\, ,
\end{equation} 
see \cite{Jurdjevic1997,Prokhorov1993},  is related to (our) complex Hamiltonian by
$$\mathcal{H}(x,\Phi,u)=H(x,\Psi,u) \, , \qquad \Psi=\overline{\eta}^T \Phi\,
.$$
\end{remark}

We next apply the Pontryagin machinery as outlined above to the abstract Loewner equation
(\ref{eq:L1a}), so we replace the finite dimensional complex vector space $V$, the
state space $S\subseteq V$, the control set $U \subseteq V$ and the control
system $g : S \times U \to V$ 
by
$$ V=\HB \, , \quad S=\Sn \, , \quad \, U=\Un \, , \quad
g(x,u)=x \circ u \, .$$

\renewcommand{\arraystretch}{2}
\begin{table}[h]
\begin{center}
\begin{tabular}{|l||c||c|} \hline
& Control System on $\C^n$ & Loewner Equation \\ \hline \hline
State space & $S \subseteq \C^n$ & $\Sn \subseteq \HB$ \\ \hline
Control set & $U \subseteq \C^n$ & $\Un \subseteq \HB$ \\ \hline
Controls    & $ u : \R^+_0 \to U$ & $u : \R^+_0 \to \Un$ \\ \hline
Trajectories & $x : \R^+_0 \to S$ & $x : \R^+_0 \to \Sn$ \\ \hline
Equation & $\overset{\bullet}{x}=g(x,u)$ & $\overset{\bullet}{x}=u \circ x$\\ \hline
\end{tabular}
\end{center}
\caption{Loewner equation as a control system on $\HB$}
\end{table}

\begin{remark}
As we shall see, our sligthly nonstandard definition of the Hamiltonian proves itself
as user--friendly. The main reason for this is that the dual $\HB^*$
of $\HB$ can no longer be identified with $\HB$, and in fact the topological
structure of  $\HB^*$ is fairly complicated, see
\cite{Caccioppoli1931,Toeplitz1949, Grothendieck1953a, Grothendieck1953b}. However, it will turn out that in case of
the Loewner equation, the solutions of the associated adjoint equation actually live in the
``decent'' vector space $ \text{Hol}(\B^n,\C^{n \times n}) $
of all holomorphic maps from $\B^n$ into $\C^{n \times n}$. Note that each $A
\in \text{Hol}(\B^n,\C^{n \times n})$ gives rise to the continuous linear
operator $\mathcal{A} \in \mathcal{L}(\HB)$ defined by 
$$ \mathcal{A} f:=A(\cdot) f \, ,\qquad f \in \HB \, .$$
\end{remark}

 We proceed in a purely formal way in order to emphasize the
analogy with the finite dimensional case, but wish to point out that 
the following formal considerations can be made rigorous using the elegant Fr\'echet
space calculus developed by R.~Hamilton \cite{Hamilton1982}.
We start with the adjoint equation for the Loewner ODE.

\begin{remark}[The adjoint equation of the Loewner ODE]
Recall the Loewner ODE in abstract form
$$ \overset{\bullet}{x}=u \circ x \, .$$
Taking the ``functional'' derivative of the mapping on the right-hand side,
$$ g : \Sn \times \Un \to \HB \, , \qquad g(x,u):=u \circ x \, , $$ 
with respect to $x \in \Sn$, 
we get 
$$ \frac{\partial (u \circ x)}{\partial x}=Du \circ x \, .$$
Here, $Du \in \text{Hol}(\B^n,\C^{n \times n})$ is the total derivative of $u
\in  \HB$ and hence $Du \circ x \in \HB$. Therefore,
 the adjoint equation of the Loewner ODE is
\begin{equation} \label{eq:controladj3} \begin{array}{rcl}
 \overset{\bullet}{\Phi}(t)&=&-\Phi(t) \cdot Du(t) \circ x(t) \, , \\[-2mm]
\Phi(T) &=& \id\, . 
\end{array}
\end{equation}
Recall that for each $t \in \R^+_0$ the map 
$\Phi(t)$ is a continuous linear operator on $\HB$.
It is now easy to find the solution of (\ref{eq:controladj3}) in terms of $Dx$.
In fact, just take the total derivative of
$\overset{\bullet}{x}=u \circ x \in \HB$ and get
$$ \overset{\bullet}{Dx}=Du \circ x \cdot Dx \, . $$
Hence using $Dx \cdot (Dx)^{-1}=\id$, we see that
$$ \overset{\bullet}{(Dx)^{-1}}=-(Dx)^{-1} Du \circ x \, .$$
This means that the solution to the adjoint equation (\ref{eq:controladj3}) 
along $(x,u)$ at $T$ is simply
$$ t\mapsto \Phi(t)=D x(T) \cdot (Dx(t))^{-1} \in \text{Hol}(\B^n,\C^{n \times n})
\subseteq \mathcal{L}(\HB) \,  , $$
and the corresponding Hamiltonian of the Loewner equation (\ref{eq:L1a}) for
the extremal problem (\ref{ex:cc1})  has the form
$$ \mathcal{H}(x,\Phi,u)=J\left(D x(T) \cdot (Dx)^{-1} \cdot u \circ
  x\right)\, , \quad u \in \Un \, .$$
\end{remark}

\begin{definition}
Let $J\in \HB^*$ and $T>0$. Suppose that $u^* \in \mathcal{U}_n$. Then we
define for each $t \ge 0$,
$$L_t \in  \HB^* \, , \qquad L_t(u):=J\left(D x_{u^*}(T) \cdot \left(Dx_{u^*}(t)\right)^{-1} \cdot u \circ
  x_{u^*}(t)\right)\, .$$
\end{definition}

Now we can state the analogue of Theorem \ref{thm:pmpgen} for the Loewner
equation and general linear functionals $J \in \HB^*$.

\begin{theorem}[Pontryagin Maximum Principle for the Loewner equation]
Let $J\in \HB^*$.
Suppose that $u^* : \R^+_0 \to \Un$ is an optimal control for
$J : \HB \to \C$ on $\Sn$ and time $T>0$.
Then
$$ \Re \,L_t(u^*(t)) =\max \limits_{u \in\Un} \Re\,L_t(u) 
\quad \text{ for a.e. } t \in [0,T]\, .$$
\end{theorem}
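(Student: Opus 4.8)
The plan is to carry over the classical needle-variation (spike-variation) proof of the Pontryagin Maximum Principle to the Fr\'echet space $\HB$, using crucially that the adjoint equation of the Loewner ODE has already been solved above: along $(x_{u^*},u^*)$ at $T$ its solution is the state--transition operator $t\mapsto Dx_{u^*}(T)\cdot(Dx_{u^*}(t))^{-1}$, acting on $\HB$ by pointwise left multiplication. Write $x^*:=x_{u^*}$ and keep $T>0$ fixed. First I would fix an arbitrary $v\in\Un$ together with a time $\tau\in(0,T)$ which is a Lebesgue point of the a.e.-defined curve $t\mapsto u^*(t)\circ x^*(t)\in\HB$; to be safe, take $\tau$ to be a common Lebesgue point for the restrictions to each closed subball $\overline{r\B^n}$, $r<1$, so that the admissible $\tau$ still fill out a set of full measure in $[0,T]$. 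For small $\eps>0$ let $u_\eps$ be the control equal to $v$ on $[\tau-\eps,\tau)$ and equal to $u^*$ on $\R^+_0\setminus[\tau-\eps,\tau)$; since $v,u^*(t)\in\Un$, the corresponding Herglotz vector field again lies in the class $\Un$, so by Theorem~\ref{thm:loewner} the trajectory $x_\eps:=x_{u_\eps}$ exists on all of $\R^+_0$ --- note that, in contrast to the finite-dimensional case, no finite-time blow-up issue can arise here.

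The heart of the matter is the first-order expansion of $x_\eps(T)$ in $\eps$. On $[0,\tau-\eps]$ one has $x_\eps\equiv x^*$; using that $g(x,u)=u\circ x$ is linear in $u$, that $\tau$ is a Lebesgue point, and a Gronwall-type estimate for the Loewner ODE (localised to a fixed subball, since all trajectories map $\B^n$ into $\B^n$), I would establish
\[
 x_\eps(\tau)-x^*(\tau)=\eps\,\bigl((v-u^*(\tau))\circ x^*(\tau)\bigr)+o(\eps)
\]
locally uniformly on $\B^n$. On $[\tau,T]$ the two controls coincide, so to first order in $\eps$ the difference $x_\eps-x^*$ solves the linearised (variational) equation $\overset{\bullet}{\delta x}=(Du^*\circ x^*)\cdot\delta x$ along $(x^*,u^*)$, whose transition operator from time $\tau$ to time $T$ is, by the computation leading to (\ref{eq:controladj3}), exactly $Dx^*(T)\cdot(Dx^*(\tau))^{-1}$. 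Propagating the perturbation therefore yields
\[
 x_\eps(T)-x^*(T)=\eps\,Dx^*(T)\cdot(Dx^*(\tau))^{-1}\cdot\bigl((v-u^*(\tau))\circ x^*(\tau)\bigr)+o(\eps).
\]
Since $u^*$ is optimal, $\Re J(x_\eps(T))\le\Re J(x^*(T))$ for every small $\eps>0$; applying the continuous linear functional $J$, dividing by $\eps$ and letting $\eps\to0^+$ gives $\Re J\bigl(Dx^*(T)\cdot(Dx^*(\tau))^{-1}\cdot((v-u^*(\tau))\circ x^*(\tau))\bigr)\le0$, which, unravelling the definition of $L_\tau$, is precisely $\Re L_\tau(v)\le\Re L_\tau(u^*(\tau))$.

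It remains to upgrade this to the asserted maximum principle. So far the exceptional null set depends on $v$, so I would use separability: $\Un$ is a compact --- hence separable --- subset of the metrizable space $\HB$ by Theorem~\ref{thm:classm}, so pick a countable dense set $\{v_k\}\subseteq\Un$ and let $E\subseteq[0,T]$ be the full-measure set of admissible Lebesgue points $\tau$ at which $\Re L_\tau(v_k)\le\Re L_\tau(u^*(\tau))$ for every $k$. For $\tau\in E$ the map $v\mapsto\Re L_\tau(v)$ is continuous on $\Un$ (it is the real part of the continuous linear functional $J\bigl(Dx^*(T)(Dx^*(\tau))^{-1}\,\cdot\,\bigr)$ post-composed with the continuous map $v\mapsto v\circ x^*(\tau)$), hence by density $\Re L_\tau(v)\le\Re L_\tau(u^*(\tau))$ for all $v\in\Un$, i.e.\ $\Re L_\tau(u^*(\tau))=\max_{u\in\Un}\Re L_\tau(u)$. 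Since $E$ has full measure, this is the claim.

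The step I expect to be the real obstacle is making the two first-order expansions rigorous in the Fr\'echet space $\HB$: one must justify G\^ateaux differentiability of the Loewner flow with respect to the control, control the $o(\eps)$ remainders uniformly in each seminorm $\sup_{\overline{r\B^n}}\|\cdot\|$, and identify the linearised transition operator with $f\mapsto Dx^*(T)(\cdot)\,(Dx^*(\tau)(\cdot))^{-1}f(\cdot)$. I would do this either inside Hamilton's tame Fr\'echet calculus~\cite{Hamilton1982}, or, more concretely, by fixing $r<1$ and running the standard Banach-space spike-variation estimates in the space of bounded holomorphic maps on a fixed subball --- which is legitimate because on $[0,T]$ every solution of the Loewner ODE stays in $\overline{r\B^n}$ by the Schwarz lemma for the ball, and the matrices $Dx^*(t)(\cdot)$ are invertible there by univalence of $x^*(t)$, so all compositions and inverses above are well defined --- and then letting $r\to1$. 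Granting these estimates, the rest is a faithful transcription of the proof of the finite-dimensional Theorem~\ref{thm:pmpgen}.
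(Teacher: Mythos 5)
The paper does not actually prove this theorem: it is presented as part of the expository scaffolding, and a rigorous proof of the closely related coefficient version (Theorem~\ref{thm:main2a}) is deferred to an external reference. So there is no in-text proof to compare against. Judged on its own, your proposal is a correct and well-organized transcription of the classical needle/spike-variation proof of the Pontryagin Maximum Principle into the Fr\'echet space $\HB$: the adjoint transition operator is correctly identified as pointwise left multiplication by $Dx^*(T)(\cdot)(Dx^*(t)(\cdot))^{-1}$ (this is exactly what the formal computation in the paper produces), the first-order expansion of the perturbed trajectory at the Lebesgue point $\tau$ is the standard one, the propagation to time $T$ via the variational equation is right, and the separability argument to pass from a countable dense family of needle directions to all of $\Un$ is correctly carried out (using compactness of $\Un$ from Theorem~\ref{thm:classm} and continuity of $v\mapsto L_\tau(v)$). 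You also correctly note that global existence is automatic for the Loewner ODE, so there is no blow-up pathology to control. The technical obstacle you flag --- rigorizing the $o(\eps)$ remainders in the compact-open topology --- is indeed where the real work lies, and your two suggested routes (Hamilton's tame calculus, or Banach-space estimates on $\overline{r\B^n}$ using the Schwarz lemma) are both viable; one small simplification you could add is that since $J\in\HB^*$ is continuous on a Fr\'echet space whose topology is generated by the seminorms $\sup_{\overline{r\B^n}}\|\cdot\|$, $J$ is already bounded by a single such seminorm for some fixed $r<1$, so the ``let $r\to 1$'' step at the end is unnecessary: one may fix $r$ once and for all depending only on $J$, and then the entire argument takes place in the Banach space of bounded holomorphic maps on $r\B^n$.
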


It is now a short step to a statement of the Pontryagin Maximum Principle
for the class $\S^0_n$ and coefficient functionals $J_{\alpha}$.

\begin{theorem}[Pontryagin Maximum Principle for the class $\S^0_n$] \label{thm:main2a}
Let $\alpha \in \N_0^N$ with $|\alpha| \ge 2$ and let $F \in \mathcal{S}_n^0$ be an extremal mapping
for $\Re J_{\alpha}$ over $\S^0_n$. 
 Suppose that $G$ is a Herglotz vector field in the
class $\Un$ such that $F=f^G$. Denote by $\varphi_t: \R^+_0 \to \Sn$ the solution to
the Loewner equation (\ref{eq:L}). 
 For each $t \ge 0$ define 
\begin{equation} \label{eq:hamiltonian}
 L_t \in \HB^* , \quad 
h \mapsto L_t(h):=J_{\alpha} \left( DF \cdot \left[  D\varphi_t  \right]^{-1} \cdot h\left(\varphi_t\right) \right) \, . 
\end{equation}
Then for a.e.~$t \ge 0$,
\begin{equation} \label{eq:pontryagin1}
\Re L_t(G(\cdot, t))=\max \limits_{h \in \Un}  \Re L_t(h) \, .
\end{equation}
\end{theorem}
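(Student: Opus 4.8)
\textbf{Proof plan for Theorem~\ref{thm:main2a}.}
The plan is to deduce this statement directly from the abstract Pontryagin Maximum Principle for the Loewner equation (the theorem immediately preceding it) by unwinding the definitions. Since $F\in\S_n^0$ is extremal for $\Re J_\alpha$ over $\S_n^0$, and since by Theorem~\ref{thm:ll1} we have $\S_n^0=\mathcal{R}(\infty)$, the mapping $F=f^G=\lim_{t\to\infty}e^t\varphi_t$ is reached in the limit. The first technical point I would address is that the Maximum Principle as stated above is for a \emph{finite} time horizon $T>0$ and the functional $J\in\HB^*$ evaluated at $x_u(T)$, whereas here the relevant functional is $J_\alpha$ evaluated at the limit $F=\lim_{t\to\infty}e^t\varphi_t$. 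I would handle this by observing that $J_\alpha$ depends only on a fixed Taylor coefficient of the first component, so there is $T_0>0$ (depending on $\alpha$) such that for $t\ge T_0$ the coefficient $a_\alpha$ of $e^t\varphi_t$ has already stabilised, or more robustly, by passing to the rescaled system $\tilde x_t:=e^t\varphi_t$ and noting that extremality of $F$ over $\mathcal{R}(\infty)=\mathcal{R}$ forces, for each fixed $T$, that $u^*=u_G$ restricted to $[0,T]$ is optimal for the functional $h\mapsto J_\alpha(\,\cdot\,)$ composed with the ``propagate from time $T$ to $\infty$'' map; the crucial simplification is that this propagation map is linear in the state for the coefficient functional, so it folds into the linear functional and one is back in the setting of the previous theorem. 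I expect this reduction to the finite-horizon, linear-functional case to be the main obstacle, since it requires care with the rescaling factor $e^t$ and with the fact that $J_\alpha$ is not literally continuous in the topology issues hidden in $\HB^*$ — though, as the paper notes, $J_\alpha$ \emph{is} a perfectly good element of $\HB^*$, being evaluation of a Taylor coefficient.

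Once the problem is in the form covered by the abstract Maximum Principle for the Loewner equation, the rest is bookkeeping. I would identify the state trajectory $x_{u^*}(t)$ with $\varphi_t$ (the solution of~(\ref{eq:L}) for the Herglotz field $G$), the optimal control $u^*(t)$ with $G(\cdot,t)$, and then invoke the explicit solution of the adjoint equation computed in the preceding remark: the adjoint operator is $\Phi(t)=Dx(T)\cdot(Dx(t))^{-1}$, which in the present notation and after passing to the limit $T\to\infty$ becomes $DF\cdot[D\varphi_t]^{-1}\in\text{Hol}(\B^n,\C^{n\times n})\subseteq\mathcal{L}(\HB)$. Substituting into the Hamiltonian $\mathcal{H}(x,\Phi,u)=J(\Phi\cdot(u\circ x))$ with $J=J_\alpha$, $x=\varphi_t$, $u=h$ gives exactly
\[
L_t(h)=J_\alpha\!\left(DF\cdot[D\varphi_t]^{-1}\cdot h(\varphi_t)\right),
\]
which is~(\ref{eq:hamiltonian}). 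The conclusion~(\ref{eq:pontryagin1}) is then a verbatim restatement of the pointwise maximisation property asserted by the Maximum Principle.

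There is one more point worth verifying carefully: the existence and invertibility of $D\varphi_t$ on all of $\B^n$ for a.e.\ $t$, so that the operator $DF\cdot[D\varphi_t]^{-1}$ genuinely lies in $\text{Hol}(\B^n,\C^{n\times n})$. This follows because each $\varphi_t$ is univalent on $\B^n$ with $D\varphi_t(0)=e^{-t}\id$, hence $D\varphi_t(z)$ is invertible for every $z\in\B^n$ by univalence (the Jacobian of a univalent holomorphic map in $\C^n$ never vanishes), and the inverse depends holomorphically on $z$; similarly $DF(z)$ is invertible since $F\in\S_n^0\subseteq\S_n$. Finally, the measurability in $t$ needed to make sense of ``for a.e.\ $t$'' comes from the absolute continuity of $t\mapsto\varphi_t$ established in Section~\ref{sec:loewner} together with the measurability of the control $t\mapsto G(\cdot,t)$. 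Assembling these pieces yields the theorem; I would present it as: ``Apply the Pontryagin Maximum Principle for the Loewner equation with $J=J_\alpha$, using the explicit form of the adjoint solution; the only additional observations needed are the invertibility of $D\varphi_t$ and $DF$ and the passage $T\to\infty$, both of which are routine.''
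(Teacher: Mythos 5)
The paper offers no proof at all for this theorem: it simply writes ``See \cite{Roth2015} for a rigorous proof of Theorem~\ref{thm:main2a}.'' So there is no in-paper argument to compare against; what you are attempting is to fill in the ``short step'' the paper alludes to between the abstract Pontryagin Maximum Principle for the Loewner equation and its specialisation to $\S^0_n$. Your overall strategy---take $J=J_\alpha$, plug in the explicit adjoint solution $\Phi(t)=Dx(T)(Dx(t))^{-1}$, and pass $T\to\infty$---is indeed the route the paper's narrative suggests, and your observations on the invertibility of $D\varphi_t$ and $DF$ and on measurability in $t$ are correct.

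However, the crux, which you correctly identify as the passage from finite $T$ to $T=\infty$, is handled by two claims that are both false. First, the Taylor coefficients of $e^t\varphi_t$ do not ``stabilise'' at any finite $T_0$; they evolve continuously and only converge to those of $F$ in the limit. Second, and more importantly, the ``propagate from time $T$ to $\infty$'' map is $\varphi_T\mapsto F(\cdot,T)\circ\varphi_T$, where $F(\cdot,T)$ is the Loewner chain element determined by the control on $[T,\infty)$. Composition with a fixed outer map on the left is \emph{not} linear in $\varphi_T$, so the assertion that this ``folds into the linear functional'' and reduces you directly to the linear finite-horizon theorem does not go through. What is actually needed is to \emph{linearise} this terminal cost at the optimal trajectory: its Fr\'echet derivative in the direction $\delta h$ is $z\mapsto DF(\cdot,T)(\varphi_T(z))\,\delta h(z)$, and the chain rule $DF = DF(\cdot,T)(\varphi_T)\cdot D\varphi_T$ turns this into $DF\cdot(D\varphi_T)^{-1}\,\delta h$; the spurious $e^T$ from the rescaling is then harmless because multiplying a functional by a positive scalar does not change its maximisers over $\Un$. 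That linearisation step is the real content you are glossing over, and without it the reduction is incomplete. You should also note that the abstract Loewner-space PMP you are invoking is itself presented in the paper only ``formally'' (the paper explicitly says the Hamilton--Fr\'echet calculus is needed to make it rigorous), so even with the linearisation fixed your argument inherits that caveat rather than producing the self-contained rigorous proof that \cite{Roth2015} is credited with.
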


See \cite{Roth2015} for a rigorous proof of Theorem \ref{thm:main2a}.

\begin{conclusion}
Let $\alpha \in \N_0^N$ with $|\alpha| \ge 2$ and let $F \in \mathcal{S}_n^0$.
Suppose that $G$ is a Herglotz vector field in the
class $\Un$ such that $F=f^G$.
 Then the condition that 
\begin{center}  \fboxsep5mm
\fbox{ $G(\cdot, t)$ maximizes $\Re L_t$ over $\Un$ for a.e.~$t \ge 0$}
\end{center}
is necessary for $F$ being extremal for $\Re J_{\alpha}$ over $\S^0_n$.
\end{conclusion}

\section{The Schiffer differential equation and Pontryagin's maximum principle}

We now briefly describe the intimate relation between Schiffer's differential
equation and Pontryagin's Maximum Principle for the case $\S^0_1=\S$.
For the sake of simplicity, we restrict again to the case of the $N$--th coefficient functional $J_N :\S \to
\C$, $J_N(f):=f^{(N)}(0)/N!$.

\medskip

\begin{theorem} \label{thm:main123}
Let  $F \in \S$ and suppose that $G$ is a Herglotz vector field in the
class $\mathbb{U}_1$ such that $F=f^G$. 
Denote by $\varphi_t: \R^+_0 \to \mathbb{S}_1$ the solution to
the Loewner equation (\ref{eq:L}). 
 For each $t \ge 0$ define 
\begin{equation} \label{eq:hamiltoniannew}
 L_t \in \textrm{\rm Hol}(\D,\C) , \quad 
h \mapsto L_t(h):=J_N \left( F' \cdot \left( \varphi'_t  \right)^{-1} \cdot h\left(\varphi_t\right) \right) \, . 
\end{equation}
Then the following conditions are equivalent.
\begin{itemize}
\item[(a)] $F$ ist a solution of the Schiffer differential equation
  (\ref{eq:schiffer}), that is,
\begin{equation} \label{eq:schiffernew}
 \left[ \frac{z F'(z)}{F(z)} \right]^2 P_N\left(\frac{1}{F(z)}\right)=R_N(z)
  \, ,
\end{equation}
with 
$P_N$ resp. $R_N$ defined by (\ref{eq:Pn}) resp.~(\ref{eq:Rn}), and the positivity condition (\ref{eq:schifferboundary}) holds.

\item[(b)] For a.e.~$t \ge 0$,
\begin{equation} \label{eq:pontryaginnew}
\max \limits_{h \in \mathbb{U}_1}  \Re L_t(h) = \Re L_t(G(\cdot,t)) \, .
\end{equation}
\end{itemize}
\end{theorem}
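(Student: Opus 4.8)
\medskip
\noindent\textbf{Proof strategy.} The plan is to reduce both (a) and (b) to one and the same statement about a real trigonometric polynomial attached to $F$ and to the driving measure of $G$, and then to invoke the classical equivalence of Remark~\ref{rem:schiffer}(d) between solutions of Schiffer's equation and maps admissible for $-P_N(1/w)\,dw^2/w^2$. First I would use the structure of the control set $\mathbb{U}_1$. It is compact and convex by Theorem~\ref{thm:classm}, and $h\in\mathbb{U}_1$ precisely when $-h(z)/z$ has positive real part and value $1$ at the origin, so the Herglotz representation identifies $\mathbb{U}_1$ with the probability measures on $\partial\D$; its extreme points are the maps $h_\kappa(z):=-z\,\tfrac{\kappa+z}{\kappa-z}$ ($\kappa\in\partial\D$), and $G(\cdot,t)=\int_{\partial\D}h_\kappa\,d\mu_t(\kappa)$ for an essentially unique measurable family of probability measures $\mu_t$. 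Since $L_t$ is $\C$--linear, condition (b) is then equivalent to the assertion that, for a.e.\ $t\ge0$, the measure $\mu_t$ is carried by the maximum set
$$M_t:=\Big\{\kappa\in\partial\D:\Lambda_t(\kappa)=\max_{\partial\D}\Lambda_t\Big\},\qquad \Lambda_t(\kappa):=\Re L_t(h_\kappa).$$

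\medskip
Next I would compute $L_t$ in closed form. Let $(f_t)_{t\ge0}$ be the Loewner chain generated by $G$ with $f_0=F$, so that $f_t\circ\varphi_t=F$ and $f_t'(0)=e^t$; then $F'/\varphi_t'=f_t'\circ\varphi_t$, hence $L_t(h)=J_N\big((f_t'\cdot h)\circ\varphi_t\big)$, and after the substitution $\zeta=\varphi_t(z)$ the residue at the origin that defines $J_N$ turns this into a finite expression in the first $N$ Taylor coefficients of $h$ and of $f_t$; in particular $\kappa\mapsto\Lambda_t(\kappa)$ is a real trigonometric polynomial in $\kappa$ of degree $\le N$. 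For $t=0$ this is explicit, $L_0(h)=J_N(F'h)$, and inserting $h=h_\kappa$ and comparing with (\ref{eq:Rn}) yields $\Lambda_0(\kappa)=c_0-R_N(\kappa)$ on $\partial\D$ for some real constant $c_0$; a direct estimate using $\varphi_t(z)=e^{-t}F(z)(1+o(1))$ and $h_\kappa(w)=-w+O(w^2)$ shows moreover that $\Lambda_t(\kappa)\to-\Re A_N$ as $t\to\infty$, uniformly in $\kappa$. Since the control system (\ref{eq:L1a}) is autonomous, along any trajectory satisfying the maximum condition (b) the maximized Hamiltonian $t\mapsto\int_{\partial\D}\Lambda_t\,d\mu_t=\max_{\partial\D}\Lambda_t$ is constant in $t$, hence equal to its limit $-\Re A_N$. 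Evaluating this at $t=0$ and using $\Lambda_0=c_0-R_N$ identifies the $t=0$ content of (b) precisely as the positivity condition (\ref{eq:schifferboundary}) together with $\mu_0$ being carried by the zero set of $R_N$ on $\partial\D$.

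\medskip
It remains to link this to the geometric description. By Remark~\ref{rem:schiffer}(d), condition (a) is equivalent to $F$ being admissible for $-P_N(1/w)\,dw^2/w^2$, and by the definition of admissibility such an $F$ is a finite--slit map whose slits are trajectory arcs of that quadratic differential. A finite--slit map has an essentially unique Loewner parametrization, and at a.e.\ time its driving measure is a finite combination of point masses at the prime ends of $\partial\D$ being absorbed; a computation relating the tangent of each slit at such a point to the quadratic differential --- using the closed form from the previous step --- identifies those points with $M_t$, which gives (a)$\Rightarrow$(b). Conversely, under (b) the trigonometric polynomial $\Lambda_t$ is non-constant for a.e.\ $t$, so $M_t$ is then a finite set, $\mu_t$ is finitely atomic, and $F$ is a finite--slit map; substituting the locations of the atoms, which lie in $M_t$, back into the closed form of $\Lambda_t$ and using the positivity obtained above forces each slit of $F$ to be a trajectory arc of $-P_N(1/w)\,dw^2/w^2$, so $F$ is admissible, which by Remark~\ref{rem:schiffer}(d) is (a). Conceptually, both (a) and (b) assert that $F$ is critical for $\Re J_N$ under the boundary variations of Schiffer --- equivalently under the control--needle variations of Pontryagin --- and the computations above make this quantitative.

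\medskip
The main obstacle is the passage, within the last two steps, from the local--in--$t$, boundary--valued maximum condition (b) to the global meromorphic identity (\ref{eq:schiffer}) on all of $\D$: one must show that the a priori merely trigonometric polynomial $R_N$, which (b) pins down only on $\partial\D$, is in fact the boundary value of the meromorphic right--hand side of (\ref{eq:schiffer}), and this requires propagating the identity along the Loewner flow and invoking the structure theory of trajectories of quadratic differentials. A subsidiary difficulty, needed for (b)$\Rightarrow$(a), is to exclude that $\Lambda_t$ is constant in $\kappa$ --- so that $M_t=\partial\D$ and (b) is vacuous --- on a set of times of positive measure; I expect this to follow from the way the coefficients of $\Lambda_t$ depend on $t$ through the Loewner flow. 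Finally, (a) constrains $F$ alone whereas (b) also constrains the generator $G$, but this causes no trouble, since in both cases $F$ is a slit map and hence has an essentially unique Loewner generator.
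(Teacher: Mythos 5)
The paper itself proves nothing here: Theorem \ref{thm:main123} is stated without proof, and the surrounding remark attributes (a)$\Rightarrow$(b) to Schaeffer--Schiffer--Spencer and (b)$\Rightarrow$(a) --- explicitly flagged as the deeper direction --- to Goodman, Popov and Roth. Your framework is the right one and in fact matches the sketch the author left hidden in the source: pass to the extreme points $h_\kappa(z)=-z(\kappa+z)/(\kappa-z)$ of $\mathbb{U}_1$, view $\Lambda_t(\kappa)=\Re L_t(h_\kappa)$ as a real trigonometric polynomial, compute $\Lambda_0(\kappa)=-\Re A_N-\Re R_N(\kappa)$, and exploit constancy of the maximized Hamiltonian together with the limit $\Lambda_t\to-\Re A_N$ as $t\to\infty$ to recover the boundary inequality (\ref{eq:schifferboundary}) at $t=0$. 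These observations are correct and show you understand the Schiffer--Pontryagin dictionary.

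But the proposal stops where the real work begins, and you concede this yourself in the closing paragraph. The genuine gaps are: (i) constancy of the maximized Hamiltonian in the Fr\'echet-space setting is not a free consequence of the word ``autonomous''; it has to be established (via the Hamilton calculus the paper gestures at, or directly). (ii) More seriously, the argument for (b)$\Rightarrow$(a) is circular as written. The constancy step yields only (\ref{eq:schifferboundary}), not the meromorphic identity (\ref{eq:schiffer}) on all of $\D$; Remark~\ref{rem:schiffer}(d) gives (\ref{eq:schiffer}) only once admissibility of $F$ is established, and admissibility means precisely that $F$ is a slit map whose omitted arcs are trajectories of $-P_N(1/w)\,dw^2/w^2$. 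Your chain ``$\Lambda_t$ non-constant $\Rightarrow M_t$ finite $\Rightarrow\mu_t$ finitely atomic $\Rightarrow F$ a finite-slit map $\Rightarrow$ slits are trajectory arcs'' contains two unargued implications: a $t$-dependent finitely atomic driving measure (with atom count and positions varying arbitrarily in $t$) does not by itself give a finite-slit range, and passing from a pointwise boundary maximum condition to the global geometric statement that each omitted arc lies on a $Q(w)\,dw^2>0$-trajectory is exactly the content of (b)$\Rightarrow$(a). (iii) A smaller gap: (\ref{eq:schifferboundary}) presupposes $R_N$ real on $\partial\D$, equivalently $A_N\in\R$, while your computation yields only $\min_\kappa\Re R_N(\kappa)=0$; deriving $\Im A_N=0$ from (b) is an additional step not supplied by the proposal.
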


\begin{remark}[Schiffer's equation = Pontryagin's Maximum Principle]
Note that \textit{under the assumption that $F$ is extremal for the functional
  $J_N$ over $\S$ we have:}
\begin{itemize}
\item[(i)] Condition (a) = Conclusion of Schiffer's Theorem
  \ref{thm:schiffer};
\item[(ii)] Condition (b) = Conclusion of Pontryagin's Maximum Principle\\
   \hspace*{3cm} (Theorem \ref{thm:main2a} for the  case $n=1$).
\end{itemize}
Hence Theorem \ref{thm:main123} says that the two necessary conditions for $F$
being extremal for the functional $J_N$ over $\S$ provided by Schiffer's
theorem (Condition (a)) and by Pontryagin's Maximum Principle (Condition (b))
are in fact \textit{equivalent}.
\end{remark}

A few more words are in order.

\begin{remark}
The implication (a) $\Longrightarrow$ (b) of Theorem \ref{thm:main123}
can in fact be traced back to the work of Schaeffer, Schiffer and Spencer, see
\cite{Schiffer1945,SSS1949,SchaefferSpencer1950}. Consequently, a version of
Pontryagin's Maximum Principle for a certain control system, namely the  Loewner equation 
 for $n=1$, has been known more than a decade before the discovery of
 the maximum principle by Pontryagin and his coauthors in 1956.
In fact, Leung \cite{Leung1985} speaks in this regard of the
\textit{Schiffer--Pontryagin Maximum Principle}. The converse implication 
(b) $\Longrightarrow$ (a) seems to lie deeper, see \cite{Goodman1967, Popov1969,
Roth2000}.

\hide{We give a quick outline of
the proof. Suppose that $F=f^G \in \S$ for some Herglotz vector field $G$ in the
class $\mathbb{U}_1$. Since $F$ is a solution of the Schiffer differential
equation, it is a one--slit map and it is well--known (see e.g.~\cite{Pommerenke1975,Duren1983})
that there is a measurable (even continuous) function $\kappa : \R^+_0 \to \partial \D$ such
that 
$$ G(z,t)=-z \frac{\kappa(t)+z}{\kappa(t)-z} \, .$$
Let $F(z,t)$ denote the corresponding normal Loewner
chain which is a normalized solution to the Loewner PDE
$$ \frac{\partial F}{\partial t}(z,t)=-F'(z,t)
G(z,t) \, .$$
Here $'=\frac{\partial}{\partial z}$.
Consider  
$$ Q(z,t):=\left[ \frac{z F'(z,t)}{F(z,t) }\ \right]^2 P_n(1/F(z,t)) \, ,$$
and note that $Q(\xi,t) \ge 0$ for each $\xi \in \partial \D$ with equality
for $\xi=\kappa(t)$ since $\kappa(t)$ is mapped by $F(\cdot, t)$ onto the tip
of the (erased) slit. This means that we have
$$ Q(\eta,t)-Q(\kappa(t),t) \ge 0$$}
\end{remark}

\begin{conclusion}[Extending Schiffer's theorem to higher dimensions]
Theorem \ref{thm:main2a} generalizes Schiffer's Theorem \ref{thm:schiffer}
from the class $\S=\S^0_1$ to any of the classes $\S^0_n$.
\end{conclusion}

\section{Teichm\"uller's coefficient theorem without quadratic differentials}

Using Theorem \ref{thm:main123} it is now easy to give a formulation of
Teichm\"uller's Coefficient Theorem solely in terms of the Loewner differential
equation.

\begin{theorem} \label{thm:teichneu}
Let  $F \in \S$ and suppose that $G$ is a Herglotz vector field in the
class $\mathbb{U}_1$ such that $F=f^G$. 
Denote by $\varphi_t: \R^+_0 \to \mathbb{S}_1$ the solution to
the Loewner equation (\ref{eq:L}). 
 For each $t \ge 0$ define $L_t \in \textrm{\rm Hol}(\D,\C)$ by
 (\ref{eq:hamiltoniannew}).
If  for a.e.~$t \ge 0$,
\begin{equation} \label{eq:pontryaginnew2}
\max \limits_{h \in \mathbb{U}_1}  \Re L_t(h) = \Re L_t(G(\cdot,t)) \, ,
\end{equation}
then 
$$ \Re J_N(f) \le \Re J_N(F)$$
for any  
$$f \in \mathcal{S}(A_2,\ldots, A_{N-1})\, .$$
\end{theorem}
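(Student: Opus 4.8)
The plan is to reduce Theorem~\ref{thm:teichneu} to the classical Teichm\"uller Coefficient Theorem (Theorem~\ref{thm:teich}) by passing through the dictionary between the Pontryagin maximum condition and the Schiffer differential equation provided by Theorem~\ref{thm:main123}. Since the hypothesis (\ref{eq:pontryaginnew2}) is \emph{verbatim} condition~(b) of Theorem~\ref{thm:main123}, the nontrivial implication (b)$\,\Longrightarrow\,$(a) of that theorem immediately tells us that $F$ solves the Schiffer differential equation (\ref{eq:schiffernew}) and that the positivity condition (\ref{eq:schifferboundary}) holds, where $P_N$ and $R_N$ are the polynomial resp.\ function built from the coefficients $A_2,\dots,A_{N-1}$ of $F$ via (\ref{eq:Pn}) and (\ref{eq:Rn}).

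Next I would invoke the ``if''--direction recorded in Remark~\ref{rem:schiffer}(d) (see \cite{Pommerenke1975}): a map $F\in\mathcal{S}$ satisfying Schiffer's equation (\ref{eq:schiffernew}) together with the positivity condition (\ref{eq:schifferboundary}) is admissible for the quadratic differential $-P_N(1/w)\,dw^2/w^2$. At this point one has to check that $P_N$ is of the shape demanded in the hypothesis of Theorem~\ref{thm:teich}, namely a monic polynomial of degree $N-1$. From (\ref{eq:Pn}) the coefficient of $w^{N-1}$ in $P_N$ equals $J_N(F^N)$, and since $F(z)=z+O(z^2)$ the $z^N$--coefficient of $F^N$ is $1$; hence $P_N(w)=w^{N-1}+J_N(F^{N-1})w^{N-2}+\cdots+J_N(F^2)w$, which is monic of degree $N-1$ (its vanishing constant term does no harm).

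Finally I would apply Theorem~\ref{thm:teich} with $P:=P_N$. As $F$ has just been shown to be admissible for $-P_N(1/w)\,dw^2/w^2$, Theorem~\ref{thm:teich} yields $\Re J_N(f)\le\Re J_N(F)$ for every $f\in\mathcal{S}(A_2,\dots,A_{N-1})$ (indeed with equality only for $f=F$), which is the desired conclusion.

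The substance of the argument is entirely carried by Theorem~\ref{thm:main123}, specifically by its implication (b)$\,\Longrightarrow\,$(a) translating the maximum principle back into the Schiffer equation; everything downstream is classical one--variable theory. The only points needing genuine care are bookkeeping ones: verifying the normalization of $P_N$ so that Theorem~\ref{thm:teich} applies, and making sure that the precise form of the positivity condition (\ref{eq:schifferboundary}) produced by Theorem~\ref{thm:main123} matches the one used in the admissibility statement of Remark~\ref{rem:schiffer}(d). I do not expect any new analytic obstacle beyond these identifications.
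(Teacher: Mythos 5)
Your proof is correct and follows exactly the route the paper takes: apply the implication (b)\,$\Rightarrow$\,(a) of Theorem~\ref{thm:main123} to obtain Schiffer's equation with the positivity condition, pass to admissibility for $-P_N(1/w)\,dw^2/w^2$ via Remark~\ref{rem:schiffer}(d), and conclude by Teichm\"uller's Coefficient Theorem~\ref{thm:teich}. The paper's one-line proof leaves the normalization check on $P_N$ implicit, so your verification that $J_N(F^N)=1$ makes $P_N$ monic of degree $N-1$ is a welcome bit of added care rather than a deviation.
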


\begin{proof}
By Theorem \ref{thm:main123} and Remark \ref{rem:schiffer} (d) 
this exactly is Teichm\"uller's Coefficient
Theorem \ref{thm:teich}. \end{proof}

Note that  in view of  Pontryagin's Maximum Principle,
condition (\ref{eq:pontryaginnew2}) is necessary
for $F \in \S$ being an extremal function for $J_N$ over $\S$.
Theorem \ref{thm:teichneu} simply says that this  condition is also
sufficient for extremality under a suitable side condition.

\begin{conclusion}
Let $F(z)=z+\sum_{k=2}^{\infty} A_k z^k \in \mathcal{S}$ and suppose that $G$ is a Herglotz vector field in the
class $\mathbb{U}_1$ such that $F=f^G$. For each $t \ge 0$ define $L_t \in \textrm{\rm Hol}(\D,\C)$ by
 (\ref{eq:hamiltoniannew}).
 Then the condition that 
\begin{center}  \fboxsep5mm
\fbox{ $G(\cdot, t)$ maximizes $\Re L_t$ over $\mathbb{U}_1$ for a.e.~$t \ge 0$}
\end{center}
is 
\begin{itemize}
\item[(a)]  necessary for $F$ being extremal for $\Re J_{N}$ over $\S$, and 
\item[(b)] 
sufficient for $F$ being extremal for $\Re J_{N}$ over $\S(A_2,\ldots, A_{N
-1})$.
\end{itemize}
\end{conclusion}

\begin{problem} \label{prob:final}
Find a proof of Theorem \ref{thm:teichneu} using only the Loewner differential
equation. We note that the  standard method in control theory for obtaining sufficient
conditions for optimal control functions makes use of Bellman functions.  See
\cite{Prokhorov1993} for some application of Bellman functions to the Loewner equation.
\end{problem}

\begin{problem}
Let  $\alpha \in \N^N_0$ be a multi--index with $|\alpha|\ge 2$, 
$F \in \S^0_n$ and suppose that $G$ is a Herglotz vector field in the
class $\mathbb{U}_n$ such that $F=f^G$. 
Denote by $\varphi_t: \R^+_0 \to \mathbb{S}_n$ the solution to
the Loewner equation (\ref{eq:L}). 
 For each $t \ge 0$ define $L_t \in \HB$ by
 (\ref{eq:hamiltonian}). Assume that
  for a.e.~$t \ge 0$,
\begin{equation} \label{eq:pontryaginnew3}
\max \limits_{h \in \mathbb{U}_n}  \Re L_t(h) = \Re L_t(G(\cdot,t)) \, .
\end{equation}
Is there a subset $\S_{\alpha}$ of $\S^0_n$ such that 
$$ \Re J_{\alpha}(f) \le \Re J_{\alpha}(F)$$
for any  $f \in \S_{\alpha}$\,?
In other words, is the necessary condition for $F \in
\S^0_n$ being an extremal function for $J_{\alpha}$ over $\S^0_n$ provided by
Pontryagin's Maximum Principle also sufficient under a
suitable side condition\,? For $n=1$ the answer is ``Yes'' by Theorem
\ref{thm:teichneu}, which we have seen is equivalent to Teichm\"uller's
Coefficient Theorem \ref{thm:teich}.
 An affirmative answer for $n>1$ would therefore provide an extension of
Teichm\"uller's Coefficient Theorem to higher dimensions.
\end{problem}


\bibliography{UnivalentFunctions}

\bibliographystyle{plain}

\vfill

Oliver Roth\\
Department of Mathematics\\
University of W\"urzburg\\
Emil Fischer Stra{\ss}e 40\\
97074 W\"urzburg\\
Germany\\
roth@mathematik.uni-wuerzburg.de

\end{document}